\theoremstyle{plain}
\newtheorem{theorem}{Theorem}
\newtheorem{proposition}{Proposition}
\newtheorem{corollary}{Corollary}
\newtheorem{lemma}{Lemma}
\theoremstyle{remark}
\numberwithin{equation}{section}
\renewcommand\footnotemark{}
\date{}
\begin{document}

	\title{Revisiting the  Factorization of $x^n+1$ over Finite  Fields}

	\author{Arunwan Boripan and Somphong~Jitman}

	\thanks{This research was supported by the Thailand Research Fund under Research
		Grant RSA6280042.}
	
	\thanks{A. Boripan is with the Department of Mathematics, Faculty of Science, Ramkhamhaeng University, Bangkok 10240, Thailand (email: boripan-arunwan@hotmail.com)}

	\thanks{S. Jitman (Corresponding Author)  is with the  Department of Mathematics, Faculty of Science,
		Silpakorn University, Nakhon Pathom 73000,  Thailand
		(email: sjitman@gmail.com).}

	\maketitle

\begin{abstract}
The polynomial $x^n+1$ over finite fields has been of interest due to its applications in the study of negacyclic codes over finite fields. In this paper, a rigorous treatment of the factorization of $x^n+1$ over finite fields is given as well as   its applications.  Explicit and recursive  methods  for   factorizing $x^n+1$ over finite fields are provided together with the enumeration formula.   
As applications, some families of  negacyclic codes are revisited with more clear and   simpler  forms. 

\end{abstract}

    \noindent{\bf keywords}: {Factorization, Enumeration, Polynomials,   Negacyclic Codes}

\noindent{\bf Mathematics Subject Classification}: {11T71, 11T60, 12Y05}

	\section{Introduction} 
 In coding theory,  the polynomial $x^n+1$ over finite fields       plays an important role in the study of negacyclic codes (see \cite{BR2013}, \cite{B2008}, \cite{JPR2019}, \cite{SJLU2015},  and references therein).   Precisely, a negacyclic code of  length $n$ over  $\mathbb{F}_q$  can be    uniquely  determined by  an    ideal  in the principal ring  $\mathbb{F}_q[x]/\langle x^n+1\rangle $  generated by a monic divisor of $x^n+1$. 
A brief discussion on the factorization of $x^n+1$ over finite fields $\mathbb{F}_q$   has been given in  \cite{JPR2019} and \cite{SJLU2015}.  In the case where the characteristic of  $\mathbb{F}_q$  is even, the  factorization of  $x^n+1=x^n-1$ over  $\mathbb{F}_q$ has been given  and applied in the study of cyclic codes over finite fields  in \cite{JLX2011}. 
In  \cite{BGM1993} and \cite{M1996}, an explicit form of the  factorization of $x^{2^i}+1$ over finite fields of odd characteristic has been established.

In this paper, we focus on the factorization of $x^n+1$ over finite fields $\mathbb{F}_q$  for  arbitrary positive integers $n$ and all odd prime powers $q$.  If the characteristic of $\mathbb{F}_q$ is $p$, we have  \[x^{p^sn}+1= (x^{n}+1)^{p^s}\] for all integers $n\geq 1$  and  $s\geq 0$. It is therefore  sufficient  to study  the factorization of $x^n+1$ over $\mathbb{F}_q$ such that  $n$ is  co-prime to $q$.
Here, we write $n=2^in'$  for some  integer $i\geq 0$ and odd positive integer $n'$ such that $\gcd(n',q)=1$.   

Before proceed to  the general results,  we  consider  a pattern on  the    factorization of $x^{2^i11}+1 $ over $\mathbb{F}_5$. We have 
\begin{align*} 
x^{2\cdot 11}+1
=&f_1(x)f_2(x)f_3(x)f_4(x)f_5(x)f_6(x)\\
x^{2^2\cdot 11}+1=&f_1(x^2)f_2(x^2)f_3(x^2)f_4(x^2)f_5(x^2)f_6(x^2)\\ 
\vdots&\\
x^{2^i\cdot 11}+1=&f_1(x^{2^{i-1}})f_2(x^{2^{i-1}})f_3(x^{2^{i-1}})f_4(x^{2^{i-1}})f_5(x^{2^{i-1}})f_6(x^{2^{i-1}})
\end{align*}
for all $i\geq 1$, where $f_1(x)=x + 2$, $f_2(x)=x + 3$, $f_3(x)=x^5 + x^4 + x^3 + 2x^2 + x + 2$, $f_4(x)=x^5 + 2x^4 + x^3 + 2x^2 + 3x + 2$, $f_5(x)=x^5 + 3x^4 + x^3 + 3x^2 + 3x + 3$ and $f_6(x)=x^5 + 4x^4 + x^3 + 3x^2 + x + 3$.   It is easily seen that the   factorization  can be determined recursively on the exponent $i$ of $2$ and the number of  monic irreducible factors of  $x^{2^i11}+1 $ is  a constant independent of $i\geq 2$.  

In this paper,  a complete study on  the above pattern of the factorization of $x^{2^in'}+1 $  over $\mathbb{F}_q$ is given.  Precisely, we prove that there exists a positive integer $k$ such that  the number of monic irreducible factors of $x^{2^in'}+1$  over $\mathbb{F}_q$ becomes a constant for all positive integers $i\geq k$.
In the cases where ${\rm ord}_{n'}(q)$ is odd,   a complete recursive factorization of $x^{2^in'}+1$  over $\mathbb{F}_q$  is provided  together with   a recursive formula for the number of its monic irreducible factors for all positive integers $i$.   In the cases where  ${\rm ord}_{n'}(q)$ is even, a   recursive factorization of  $x^{2^in'}+1$  over $\mathbb{F}_q$  is given  for all positive integers $i\geq k$.   As applications, constructions and enumerations of some negacyclic codes  of lengths $2^in'$ over  $\mathbb{F}_q$ are given  based on the above results.

The paper is organized as follows.  Preliminary concepts and results on the factorization of  $x^n+1$ over finite fields are recalled in Section~\ref{sec2}. In Section~\ref{sec3},   the number theoretical results and properties of $q$-cyclotomic cosets required in the study of   the factorization of  $x^{2^in'}+1$ are established.  Recursive methods for factorizing $x^{2^in'}+1$  and   enumerating  its monic irreducible factors are given in Section~\ref{sec4}. Applications   in the study of negacyclic codes over finite fields are revisited  in Section~\ref{sec5}.  

\section{Preliminary}

\label{sec2}

In this section, basic concepts  and tools  used in the study of  the  factorization of  $x^n+1$  over finite fields  and the enumeration of its monic irreducible factors  are recalled.   

For a positive integer $a$  and an  integer $s$,  the notation  $2^s||a $ is used whenever  $s$ is the largest integer such that $a$ is divisible by $2^s$, or equivalently, $2^s|a$ but $2^{s+1}\nmid a$.  For  an integer $a$ and a positive integer $n$, denote by  ${\Theta_{n}(a)}$ the additive order of $a$ modulo $n$.  In the case where $\gcd(a,n)=1$, denote by  ${{\rm  ord}_{ n}(a)}$ the multiplicative order of $a$ modulo $n$.  By abuse of notation,  we write  ${{\rm  ord}_{ 1}(a)}=1$.

For a prime power  $q$,  a positive integer  $n$     co-prime to $q$,  and an integer $0\leq a<n$,  the {\em $q$-cyclotomic  coset  modulo  $n$  containing $a$} is defined to be  
\begin{align*}
Cl_{q,n}(a) =\{aq^{j} \, (\mathrm{mod}~ n) \mid j =0,1,2,\dots\}.
\end{align*}
It is not difficult to see that  
$
Cl_{q,n}(a)  =\{aq^{j} \, (\mathrm{mod } \,n) \mid  0\leq j < \mathrm{ord}_{{\Theta}_n(a)}{(q)}\}$
and $|Cl_{q,n}(a)  |=\mathrm{ord}_{{\Theta}_n(a)}{(q)}$. Moreover, $ {\Theta}_n(a)= {\Theta}_n(j)$ for all  $j\in Cl_{q,n}(a)$.
Let $S_{q}(n)$ denote a complete set of representatives of the $q$-cyclotomic  cosets modulo  $n$ and let $\alpha$ be a primitive $n$th root of unity in  some  extension field of $\mathbb{F}_{q}$. It is well known (see \cite{LSBook}) that 
\begin{align}\label{xn-1}
x^n-1=\prod\limits_{a\in S_{q}(n)} f_a(x),
\end{align}
where
\begin{align}\label{fa}  f_a(x)=\prod\limits_{j\in Cl_{q,n}(a) }{(x-\alpha^j)}\end{align}
is  the minimal polynomial   of $\alpha^a$  over $\mathbb{F}_{q}$ referred as  the irreducible polynomial   induced by $Cl_{q,n}(a)$.

 In \cite{BJ2019},  a basic idea  for the factorization of $x^{2^in'}+1$ is  given  using $\eqref{xn-1} $ and the following lemmas. 

\begin{lemma}[{\cite[Lemma 2]{BJ2019}}] \label{srim-parity} Let $q$ be an odd prime power and let $n'$ be an odd positive integer such that $\gcd(q,n')=1$. Let  $i\geq 0$ and   $0\leq a < 2^{i+1}n'$ be integers. Then   the  elements in $Cl_{q, 2^{i+1}n'} (a)$  have the same parity.
\end{lemma}

\begin{lemma}[{\cite[Lemma 3]{BJ2019}}]  \label{div-1}  Let $q$ be an odd prime power and let $n'$ be an odd positive integer such that $\gcd(q,n')=1$. Let  $i\geq 0$ and   $0\leq a < 2^{i+1}n'$ be integers.  Then the polynomial  $f_{a}(x)$ induced by $Cl_{q, 2^{i+1}n'} (a)$  is a divisor of $ x^{2^in'}+1$ if and only if 
	  $a$ is odd.
\end{lemma}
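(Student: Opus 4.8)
The plan is to realize both $f_a(x)$ and $x^{2^in'}+1$ as products of distinct linear factors over a common splitting field and then compare their root sets. Write $N=2^{i+1}n'$, and note that $\gcd(q,N)=1$ because $q$ is odd (so $\gcd(q,2^{i+1})=1$) and $\gcd(q,n')=1$; hence $x^N-1$ is separable and, by \eqref{xn-1}, factors into the distinct monic irreducibles $f_a(x)$ with $a\in S_{q}(N)$, where $f_a(x)=\prod_{j\in Cl_{q,N}(a)}(x-\alpha^j)$ for a fixed primitive $N$th root of unity $\alpha$ in an extension of $\mathbb{F}_q$.

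First I would identify the roots of $x^{2^in'}+1=x^{N/2}+1$ among the $N$th roots of unity. Since $\alpha$ has order $N$, we have $\alpha^{N/2}=-1$, so $(\alpha^j)^{N/2}=(\alpha^{N/2})^{j}=(-1)^{j}$ for every integer $j$. Thus $\alpha^j$ is a root of $x^{N/2}+1$ exactly when $j$ is odd, and because the elements $\alpha^j$ with $0\leq j<N$ are pairwise distinct, we obtain $x^{N/2}+1=\prod_{0\leq j<N,\; j \text{ odd}}(x-\alpha^j)$, a separable polynomial of degree $N/2$.

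The next step is the divisibility comparison. As $x^N-1$ is separable, $f_a(x)$ divides $x^{N/2}+1$ if and only if every root of $f_a(x)$ is a root of $x^{N/2}+1$, that is, if and only if every exponent $j\in Cl_{q,N}(a)$ is odd. To finish, I would invoke Lemma~\ref{srim-parity}: all elements of $Cl_{q,N}(a)$ share a common parity, and since $a\in Cl_{q,N}(a)$ (take $j=0$ in the definition of the coset, using $0\leq a<N$), this common parity is precisely that of $a$. Consequently, every exponent in $Cl_{q,N}(a)$ is odd if and only if $a$ is odd, which yields the claim.

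I do not expect a serious obstacle: the entire argument reduces to the elementary computation $\alpha^{N/2}=-1$ together with the parity statement of Lemma~\ref{srim-parity}, which is quoted and carries the real content. The only point that must be stated carefully is the separability of $x^N-1$, which guarantees that the linear factors are distinct and legitimizes passing freely between ``$f_a(x)$ divides $x^{N/2}+1$'' and ``the root set of $f_a(x)$ is contained in that of $x^{N/2}+1$''; this follows immediately from $\gcd(q,N)=1$.
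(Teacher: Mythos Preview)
Your argument is correct: identifying the roots of $x^{N/2}+1$ as exactly the odd powers of a primitive $N$th root of unity via $\alpha^{N/2}=-1$, and then invoking Lemma~\ref{srim-parity} to conclude that the root set of $f_a$ lies inside this set precisely when $a$ is odd, is the natural and complete proof. The paper itself does not supply a proof of this lemma; it is quoted without proof from \cite[Lemma~3]{BJ2019}, so there is no in-paper argument to compare against, but your proof is exactly the standard one and would be the expected justification.
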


From Lemma \ref{srim-parity}, the parity of   a representative of $Cl_{q, 2^{i+1}n'} (a)$ is independent of its choices.  By Lemma \ref{div-1}, the monic  irreducible divisors of  $ x^{2^in'}+1$ are induced by  the   $q$-cyclotomic  cosets  modulo $2^{i+1}n'$ containing odd integers.  Let $SO_{q}(n)$ (resp., $SE_{q}(n)$) denote a complete set of representatives of the $q$-cyclotomic  cosets  containing odd integers  (resp., even integers) modulo  $n$.  It follows that  \begin{align} \label{facn0}
x^{2^i n^\prime}+1&=\dfrac{x^{2^{i+1} n^\prime}-1}{x^{2^i n^\prime}-1}  = 
 \prod\limits_{a\in SO_{q}(2^{i+1} n')} f_a(x)
\end{align}
for all $i\geq 0$.

For a positive integer $n$ and a prime power $q$, let $N_q(n)$ denote the number of monic irreducible  factors of $x^n+1$ over $\mathbb{F}_q$.  Based on \cite[Equation (3.1)]{JPR2019}, it can be deduced that

\begin{align} \label{nqn'}
N_q(2^i n')=\sum_{d\mid n'} \frac{\phi(2^{i+1}d)}{{\rm  ord}_{2^{i+1}d}(q)}.
\end{align}

As discussed above, the $q$-cyclotomic cosets modulo $2^{i+1}n'$ containing odd integers   are key to determine the factorization of  $x^{2^in'}+1$ over $\mathbb{F}_q$ and the enumeration of its monic irreducible factors.   Properties of these cosets  are study in the next section.

%
%
%

\section{Number Theoretical Results and  Cyclotomic Cosets}  \label{sec3}

In this section, number theoretical  results required in the factorization of $x^{2^in'}+1$ are derived. Subsequently,   properties of $q$-cyclotomic cosets modulo $2^{i+1}n'$ containing odd integers  are established for all positive integers $i$ and odd positive integers $n'$. These results are key   in the study of  the factorization of $x^{2^in'}+1$  in Section \ref{sec4}.

 A relation on the carnality of the $q$-cyclotomic costs containing  odd integers $a$ and $a+2^in'$ modulo $2^{i+1}n'$ is given in the following lemma.
 
 \begin{lemma} \label{lemCardinal} Let $q$ be an odd prime power   and let $n'$ be an odd positive integer such that $\gcd(q,n')=1$.    Then $|Cl_{q,2^{i+1}n'}(a)|=|Cl_{q,2^{i+1}n'}(a+2^in')|$   all odd    integers $a$ and  for all positive integers   $i$.
 	
 \end{lemma}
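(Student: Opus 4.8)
The plan is to reduce the statement to a comparison of additive orders. Writing $m=2^{i+1}n'$, recall from the preliminaries that $|Cl_{q,m}(b)|=\mathrm{ord}_{\Theta_m(b)}(q)$, where $\Theta_m(b)=m/\gcd(b,m)$ is the additive order of $b$ modulo $m$. Hence it suffices to prove that $\Theta_m(a)=\Theta_m(a+2^in')$ for every odd $a$ and every positive integer $i$; since both cosets are then taken modulo $m$ with equal additive orders, the equality of cardinalities follows at once.

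First I would record parities. Since $a$ is odd and $2^in'$ is even (because $i\geq 1$), the integer $a+2^in'$ is again odd, so both $a$ and $a+2^in'$ are coprime to $2^{i+1}$. This lets me split off the $2$-part of each gcd: $\gcd(a,m)=\gcd(a,n')$ and $\gcd(a+2^in',m)=\gcd(a+2^in',n')$. Note that this is precisely the place where the oddness of $a$ and the hypothesis $i\geq 1$ are used, and I expect the careful bookkeeping of the $2$-part versus the odd part of each gcd to be the only delicate point.

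Next I would exploit the congruence $a+2^in'\equiv a \pmod{n'}$, which holds because $n'\mid 2^in'$. This yields $\gcd(a+2^in',n')=\gcd(a,n')$, and combining with the previous step gives $\gcd(a+2^in',m)=\gcd(a,m)$. Therefore $\Theta_m(a+2^in')=m/\gcd(a+2^in',m)=m/\gcd(a,m)=\Theta_m(a)$, which is exactly the reduction identified in the first paragraph.

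Finally, applying $|Cl_{q,m}(b)|=\mathrm{ord}_{\Theta_m(b)}(q)$ with $b=a$ and with $b=a+2^in'$, and substituting $\Theta_m(a)=\Theta_m(a+2^in')$, I obtain $|Cl_{q,2^{i+1}n'}(a)|=|Cl_{q,2^{i+1}n'}(a+2^in')|$, as claimed. For completeness I would also remark that $\gcd(q,\Theta_m(b))=1$ holds throughout, since $\Theta_m(b)\mid m=2^{i+1}n'$ with $q$ odd and $\gcd(q,n')=1$, so the multiplicative orders above are all well defined.
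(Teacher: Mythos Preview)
Your proof is correct and follows essentially the same approach as the paper: both arguments reduce the claim to showing $\Theta_{2^{i+1}n'}(a)=\Theta_{2^{i+1}n'}(a+2^in')$ by computing the gcds, using that $a$ and $a+2^in'$ are odd and congruent modulo $n'$, and then invoke $|Cl_{q,m}(b)|=\mathrm{ord}_{\Theta_m(b)}(q)$. Your write-up is slightly more explicit about where each hypothesis is used and adds the well-definedness remark, but there is no substantive difference.
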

 \begin{proof} Let $a$ be an odd integer and let $i$ be a positive integer.  
 	Then  \begin{align*}
 	{\Theta_{2^{i+1}n'}(a)}&= \dfrac{{2^{i+1}n' }}{\gcd(2^{i+1}n',a) } =\dfrac{{2^{i+1}n'}}{\gcd(n',a)} =\dfrac{2^{i+1}n'}{\gcd(n',a+2^in')}\\
 	& = \dfrac{2^{i+1}n'}{\gcd(2^{i+1}n',a+2^in') } = {\Theta_{2^{i+1}n'}(a+2^in')}. \end{align*}
 	Hence,  	\[|Cl_{q,2^{i+1}n'}(a)|= {\rm ord}_{\Theta_{2^{i+1}n'}(a)}(q) = {\rm ord}_{\Theta_{2^{i+1}n'}(a+2^in')}(q) =|Cl_{q,2^{i+1}n'}(a+2^in')|\]
 	as desired.
 \end{proof}
 
 Properties  of $q$-cyclotomic cosets   with  $q \equiv 1 \ ({\rm mod }\ 4) $  and $q \equiv 3 \ ({\rm mod }\ 4) $ are given separately  in the following subsections. 
 
 \subsection{$q \equiv 3 \ ({\rm mod }\ 4) $}
   In this subsection, we focus on properties of $q$-cyclotomic  cosets in the case where $q \equiv 3 \ ({\rm mod }\ 4) $.

First, we determine an explicit formula  for  ${\rm ord}_{2^i}(q)$    for all odd prime powers $q \equiv 3 \ ({\rm mod }\ 4) $ and  positive integers $i$.

\begin{lemma} \label{ord2iq3}Let  $q$ be an odd prime power and let $\beta$ be   the positive integer such that $2^\beta|| (q^2-1)$.   Let $i$ be a positive integer.    If $q \equiv 3 \ ({\rm mod }\ 4) $, then 
		\[ {\rm ord}_{2^i}(q)= \begin{cases} 1 & \text{ if } i=1,\\
		2& \text{ if }2\leq i\leq \beta,\\ 
		2^{i-\beta+1} & \text{ if }i\geq \beta+1 .
		\end{cases}\]
\end{lemma}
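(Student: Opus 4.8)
The plan is to translate the problem into tracking how the $2$-adic valuation of $q^{2^{j}}-1$ grows with $j$. Write $v_2(m)$ for the largest exponent with $2^{v_2(m)}\| m$, so that the hypothesis reads $v_2(q^2-1)=\beta$. The starting point is that $(\mathbb{Z}/2^i\mathbb{Z})^{\times}$ has order $2^{i-1}$, so every unit modulo $2^i$---in particular $q$---has order a power of $2$. Hence $\mathrm{ord}_{2^i}(q)=2^m$, where $m$ is the least nonnegative integer for which $q^{2^m}\equiv 1 \pmod{2^i}$, equivalently $v_2(q^{2^m}-1)\geq i$. Thus the whole statement reduces to computing the sequence $a_j:=v_2(q^{2^j}-1)$ and reading off, for each $i$, the least $m$ with $a_m\geq i$.

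First I would establish the two seed values. Since $q\equiv 3\pmod 4$ we have $v_2(q-1)=1$, so $a_0=1$; and because $q+1\equiv 0\pmod 4$ we get $\beta=v_2(q^2-1)=v_2(q-1)+v_2(q+1)=1+v_2(q+1)\geq 3$, so $a_1=\beta$. Next comes the inductive step: assuming $a_j\geq 3$ (true at $j=1$ since $a_1=\beta\geq 3$), the congruence $q^{2^{j}}\equiv 1\pmod 4$ gives $q^{2^{j}}+1\equiv 2\pmod 4$, so $v_2(q^{2^{j}}+1)=1$; combining this with the factorization $q^{2^{j+1}}-1=(q^{2^{j}}-1)(q^{2^{j}}+1)$ yields $a_{j+1}=a_j+1\geq 3$. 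This closes the induction and produces the closed form $a_j=\beta+j-1$ for all $j\geq 1$, together with $a_0=1$.

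Finally I would match this against the three ranges. For $i=1$ the order is $1$ since $q$ is odd. For $2\leq i\leq \beta$ we have $a_0=1<i\leq\beta=a_1$, so the least admissible $m$ is $m=1$ and the order is $2^1=2$. For $i\geq \beta+1$, the inequality $a_m=\beta+m-1\geq i$ (for $m\geq 1$) holds exactly when $m\geq i-\beta+1$; since $i-\beta+1\geq 2>0$ this is the least $m$ overall (the case $m=0$ gives $a_0=1<i$), so $\mathrm{ord}_{2^i}(q)=2^{i-\beta+1}$.

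The main point requiring care is the valuation bookkeeping at the seed $a_1$ and the boundary $i=\beta$: one must confirm that $a_1=\beta$, that is, that the factor $q+1$ carries all of the excess $2$-power while $q-1$ contributes exactly one factor of $2$. This is precisely where the hypothesis $q\equiv 3\pmod 4$ (rather than $q\equiv 1\pmod 4$) is used, and it is what separates this lemma from its $q\equiv 1\pmod 4$ counterpart. Everything else is routine once the recurrence $a_{j+1}=a_j+1$ is in hand.
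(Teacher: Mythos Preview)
Your proof is correct and follows essentially the same approach as the paper: both arguments hinge on the telescoping factorization $q^{2^{j+1}}-1=(q^{2^{j}}-1)(q^{2^{j}}+1)$ together with the observation that $2\|(q^{2^{j}}+1)$ for $j\geq 1$, from which the $2$-adic valuation of $q^{2^{m}}-1$ is read off. Your packaging via $v_2$ and the explicit recurrence $a_{j+1}=a_j+1$, combined with the remark that orders modulo $2^i$ are necessarily powers of $2$, is a tidy reformulation of the same computation the paper carries out directly.
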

\begin{proof}
Assume that $q \equiv 3 \ ({\rm mod }\ 4) $.  Then $2||(q-1)$  and  $2^{i}|(q^2-1)$ for all $2\leq i\leq \beta$.   Since $q^3-1=(q-1)(q^2+q+1)$ and $q^2+q+1$ is odd, we have  $2||(q^3-1)$. Hence, ${\rm ord}_{2}(q)= 1$   and ${\rm ord}_{2^i}(q)= 2$  for all $2\leq i\leq \beta$.   

Assume that   $i\geq \beta+1 $.  Since $q \equiv 3 \ ({\rm mod }\ 4) $,  it follows that $q^{2^j} \equiv 1 \ ({\rm mod }\ 4) $ for all $j\geq 1$.  Hence, $2||(q^{2^{j}} +1) $  for all $j\geq 1$. 
Since   
$(q^{2^{i-\beta}} -1)(q^{2^{i-\beta}} +1)=
q^{2^{i-\beta+1}} -1= (q^2 -1) \prod\limits_{j=1}^{i-\beta}(q^{2^{j}} +1) $, we have  ${2^{i}}||(q^{2^{\beta-i+1}} -1)$  and  $2^{i} \nmid (q^{2^{t}} -1)$  for all $t\leq \beta+i$.  Hence,  
	${\rm ord}_{2^{i}}(q)= 2^{i-\beta+1}$  for all $i\geq \beta+1$. 
\end{proof}

 	Properties  of  $q$-cyclotomic cosets modulo $2^{i+1}n'$ containing odd integers  are established in the next  proposition.  
\begin{proposition} \label{propQ3} Let $q$ be a prime power such that $q \equiv 3 \ ({\rm mod }\ 4) $ and let $n'$ be an odd positive integer such that $\gcd(q,n')=1$.    Let  $\lambda\geq 0$ be the integer such that $2^\lambda||  {\rm ord}_{n'}(q)$ and let  $\beta$ be   the positive integer such that $2^\beta|| (q^2-1)$.     Then  the following statements hold.
	\begin{enumerate}[$i)$]

		\item  If $\lambda=0$, then  the following statements hold.
		\begin{enumerate}[$a)$]
			
			\item  $Cl_{q,2^{i+1}n'}(a)\neq Cl_{q,2^{i+1}n'}(a+2^in')$ all odd    integers $a$ and    integers  $2\leq i\leq \beta-1.$
			\item  $Cl_{q, 2^{i+1}n'} (a)   =Cl_{q, 2^{i+1}n'} (a+2^in')   = Cl_{q, 2^{i}n'} (a) \cup(Cl_{q, 2^{i}n'} (a) +2^{i}n')$    for all odd  integers $a$ and   integers    $  i =1$ or $i\geq \beta$.
		\end{enumerate}
		
		\item  If $\lambda>0$, then  the following statements hold. 
		
		\begin{enumerate}[$a)$]
			\item   $ Cl_{q, 2^{ \lambda+\beta-1}n'} (1) \ne  Cl_{q, 2^{\lambda+\beta-1}n'} (1+2^{\lambda+\beta-2}n')$.
			\item  $Cl_{q, 2^{i+1}n'} (a)   = Cl_{q, 2^{i}n'} (a) \cup(Cl_{q, 2^{i}n'} (a) +2^{i}n')$    for all odd   integers $a$ and    integers $  i \geq \lambda+\beta-1$. 
		\end{enumerate}
	\end{enumerate}
\end{proposition}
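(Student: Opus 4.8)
The plan is to reduce every assertion to a single dichotomy governing how an odd coset behaves under the reduction map $\pi\colon \mathbb{Z}/2^{i+1}n'\mathbb{Z}\to \mathbb{Z}/2^in'\mathbb{Z}$. Since $a$ is odd and $i\geq 1$, the two $\pi$-preimages of $a\bmod 2^in'$ are the odd residues $a$ and $a+2^in'$, and $\pi$ commutes with multiplication by $q$. Hence $\pi$ maps $Cl_{q,2^{i+1}n'}(a)$ onto $Cl_{q,2^in'}(a)$, and because multiplication by $q$ acts transitively on $Cl_{q,2^{i+1}n'}(a)$ while commuting with $\pi$, all fibres of this surjection share a common size. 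Thus exactly one of two things happens: either $\pi$ is injective on the coset, in which case $a+2^in'\notin Cl_{q,2^{i+1}n'}(a)$ and the preimage $Cl_{q,2^in'}(a)\cup(Cl_{q,2^in'}(a)+2^in')$ splits into the two equal-size cosets $Cl_{q,2^{i+1}n'}(a)$ and $Cl_{q,2^{i+1}n'}(a+2^in')$ (their equal cardinality also being Lemma \ref{lemCardinal}); or $\pi$ is two-to-one, in which case the coset is the whole preimage and $Cl_{q,2^{i+1}n'}(a)=Cl_{q,2^{i+1}n'}(a+2^in')=Cl_{q,2^in'}(a)\cup(Cl_{q,2^in'}(a)+2^in')$. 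These are precisely the two conclusions appearing in the proposition, and the second (merging) case occurs exactly when $|Cl_{q,2^{i+1}n'}(a)|=2|Cl_{q,2^in'}(a)|$.

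To detect which case occurs I would pass to orders. Writing $m=n'/\gcd(a,n')$, one has $\Theta_{2^{j}n'}(a)=2^{j}m$ for every $j\geq 1$, exactly as in the proof of Lemma \ref{lemCardinal}, so $|Cl_{q,2^{j}n'}(a)|={\rm ord}_{2^{j}m}(q)={\rm lcm}\big({\rm ord}_{2^{j}}(q),{\rm ord}_{m}(q)\big)$ because $\gcd(2^j,m)=1$. Since ${\rm ord}_{2^{i+1}}(q)$ equals either ${\rm ord}_{2^{i}}(q)$ or $2\,{\rm ord}_{2^{i}}(q)$, comparing the exponents of $2$ shows the ratio ${\rm ord}_{2^{i+1}m}(q)/{\rm ord}_{2^{i}m}(q)$ is always $1$ or $2$, confirming the dichotomy is exhaustive; moreover merging occurs precisely when ${\rm ord}_{2^{i+1}}(q)=2\,{\rm ord}_{2^{i}}(q)$ \emph{and} the $2$-part of ${\rm ord}_{m}(q)$ is at most the $2$-part of ${\rm ord}_{2^{i}}(q)$. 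All the values of ${\rm ord}_{2^{j}}(q)$ needed below are supplied by Lemma \ref{ord2iq3}, and for $q\equiv 3\ (\mathrm{mod}\ 4)$ one has $\beta\geq 3$, so the range $2\leq i\leq\beta-1$ in $i.a)$ is nonempty and treated uniformly.

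For part $i)$, where $\lambda=0$, every ${\rm ord}_m(q)$ divides the odd number ${\rm ord}_{n'}(q)$ and is therefore odd, so its $2$-part is trivial and the merging criterion collapses to the single condition ${\rm ord}_{2^{i+1}}(q)=2\,{\rm ord}_{2^{i}}(q)$, independent of $a$. Feeding in Lemma \ref{ord2iq3}, I get ${\rm ord}_{2^{i+1}}(q)={\rm ord}_{2^{i}}(q)=2$ exactly for $2\leq i\leq \beta-1$, the distinct case giving $i.a)$, and ${\rm ord}_{2^{i+1}}(q)=2\,{\rm ord}_{2^{i}}(q)$ for $i=1$ and for all $i\geq\beta$, the merging case giving $i.b)$. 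For part $ii)$, where $\lambda>0$, I would use that $i\geq \lambda+\beta-1\geq\beta$ forces ${\rm ord}_{2^{i}}(q)=2^{\,i-\beta+1}$ with exponent $i-\beta+1\geq\lambda$; hence the $2$-part of ${\rm ord}_{2^{i}}(q)$ dominates $2^\lambda$ and therefore the $2$-part of every ${\rm ord}_m(q)$, so merging holds for all odd $a$, yielding $ii.b)$. Finally $ii.a)$ is the boundary instance $i=\lambda+\beta-2$ with $a=1$, so $m=n'$: here ${\rm ord}_{2^{i+1}}(q)=2^\lambda$ divides ${\rm ord}_{n'}(q)$ while ${\rm ord}_{2^{i}}(q)$ divides $2^\lambda$, so both lcms equal ${\rm ord}_{n'}(q)$, the order does not double, and the two cosets stay distinct.

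The conceptual core, namely the fibre argument producing the clean two-way split, is short. I expect the main obstacle to be the $2$-adic bookkeeping in the case analysis: aligning the boundary values coming from Lemma \ref{ord2iq3} (the transitions at $i=1$ and $i=\beta$, and the small cases $\lambda=1,2$ in $ii.a)$) so that the exponent comparisons come out exactly right, and verifying in each regime that the dominance condition on the $2$-parts holds or fails as claimed.
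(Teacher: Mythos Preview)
Your proposal is correct. Both your argument and the paper's hinge on the same order comparisons supplied by Lemma~\ref{ord2iq3}, but the packaging differs. The paper treats the four statements one at a time: for the ``distinct'' conclusions $i.a)$ and $ii.a)$ it shows ${\rm ord}_{\Theta_{2^{i+1}n'}(a)}(q)={\rm ord}_{\Theta_{2^{i}n'}(a)}(q)$ and then derives a contradiction from a putative equality of cosets; for the ``merging'' conclusions $i.b)$ and $ii.b)$ it shows the order doubles, exhibits $aq^{{\rm ord}_{2^{i}n'}(q)}\equiv a+2^in'\ ({\rm mod}\ 2^{i+1}n')$ to get the first equality, and then obtains the union description by an inclusion-plus-cardinality count. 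Your fibre argument via the reduction map~$\pi$ replaces these four separate arguments with a single structural dichotomy (injective versus two-to-one on the orbit), reducing everything to the numerical test ``does the order double and does $2^{v_2({\rm ord}_m(q))}$ divide ${\rm ord}_{2^i}(q)$?''. This buys economy and a uniform treatment of all cases; the paper's approach, by contrast, is more explicit per statement and does not require articulating the orbit-map abstraction. Your handling of the boundary case $ii.a)$ is also slightly slicker than the paper's, which splits into $\lambda=1$ and $\lambda\geq 2$ subcases, whereas your observation that ${\rm ord}_{2^{\lambda+\beta-1}}(q)=2^\lambda$ divides ${\rm ord}_{n'}(q)$ covers both at once.
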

\begin{proof} First, we observe that $\beta\geq 3$, $2||(q-1)$ and $2^{\beta-1}||(q+1)$.

	To prove $i)$, assume that  $\lambda=0$.  In this case,  $ {\rm ord}_{n'}(q)$ is odd which implies that ${\rm ord}_{{\Theta_{n'}(a)}}(q) $ is odd   for all odd positive integers $a$.

	To prove $a)$,  let $a$ be an odd integer and let $i$ be an integer such that $2\leq i\leq \beta-1$.   By Lemma \ref{ord2iq3},  it follows that   ${\rm ord}_{2^{i}}(q) =2={\rm ord}_{2^{i+1}}(q) $. 
	Since ${\rm ord}_{{\Theta_{n'}(a)}}(q) $ is odd,  it can be deduced that  $ {\rm ord}_{{\Theta_{2^{i+1}n'}(a)}}(q)=   {\rm ord}_{2^{i+1}{\Theta_{n'}(a)}}(q) ={\rm lcm} ( {\rm ord}_{2^{i+1}}(q) ,  {\rm ord}_{{\Theta_{n'}(a)}}(q) ) 
	={\rm lcm} ( {\rm ord}_{2^{i}}(q) ,  {\rm ord}_{{\Theta_{n'}(a)}}(q) ) =  {\rm ord}_{{\Theta_{2^{i}n'}(a)}}(q)$.  
	Suppose that   $Cl_{q,2^{i+1}n'}(a)= Cl_{q,2^{i+1}n'}(a+2^in').$  Since $a \not\equiv a+2^in' \, (\textrm{mod} \,2^{i+1}n') $, there exists $0< j< {\rm ord}_{{\Theta_{2^{i+1}n'}(a)}}(q)$ such that  $a+2^in'\equiv \,aq^j \, (\textrm{mod} \,2^{i+1}n').$ Hence,   we have  $a\equiv \,aq^j \, (\textrm{mod} \,2^{i}n') $ which implies that $ {\rm ord}_{{\Theta_{2^{i}n'}(a)}}(q)\leq j <   {\rm ord}_{{\Theta_{2^{i+1}n'}(a)}}(q) = {\rm ord}_{{\Theta_{2^{i}n'}(a)}}(q)$, a contradiction.  	Therefore, $Cl_{q,2^{i+1}n'}(a)\neq Cl_{q,2^{i+1}n'}(a+2^in')$ as desired.

		To prove $b)$,   
		let $a$ be an odd integer and let $i$ be an integer such that  $i =1$ or $i\geq \beta$.       By Lemma \ref{ord2iq3},  we have ${\rm ord}_{2^{i+1}}(q)= 2 {\rm ord}_{2^i}(q)$.  Since ${\rm ord}_ {n'}(q) $ is odd, we have $ {\rm ord}_{{{2^{i+1}n'}}}(q)  ={\rm lcm} ( {\rm ord}_{2^{i+1}}(q) ,  {\rm ord}_{ n'}(q) ) 
		={\rm lcm} ( 2{\rm ord}_{2^{i}}(q) ,  {\rm ord}_{ n'}(q) ) =2  {\rm ord}_{{{2^{i}n'}}}(q)$ which implies that  $a q^{ {\rm ord}_{{{2^{i}n'}}}(q)} \not\equiv   a  \, ({\rm mod}\, 2^{i+1}n')$. 
		Since 	$a q^{ {\rm ord}_{{{2^{i}n'}}}(q)} \equiv   a  \, ({\rm mod}\, 2^{i}n')$, we have  	  $a q^{ {\rm ord}_{{{2^{i}n'}}}(q)} \equiv   a +2^in'  \, ({\rm mod}\, 2^{i+1}n')$.    Hence,  $a+2^in'\in Cl_{q, 2^{i+1}n'} (a)$ which implies that   $Cl_{q, 2^{i+1}n'} (a)   =Cl_{q, 2^{i+1}n'} (a+2^in')$.  This proves the first equality.

		  For the second equality, let $b\in  Cl_{q, 2^{i+1}n'} (a)$.  Then   $b\equiv aq^j \, ({\rm mod }\, 2^{i+1}n')$  for some $0\leq j< {\rm ord}_{{\Theta_{2^{i+1}n'}(a)}}(q)$.   It follows that   $b\equiv aq^j \, ({\rm mod }\, 2^{i}n')$.   If $b<2^in'$, then $b\in Cl_{q, 2^in'} (a)$. Otherwise, $b-2^in'\in   Cl_{q, 2^in'} (a)$ which implies that   $b  \in Cl_{q, 2^in'} (a)  +2^in'$. Hence,  $Cl_{q, 2^{i+1}n'} (a)   \subseteq  Cl_{q, 2^in'} (a) \cup( Cl_{q, 2^in'} (a) +2^in')$. 
		Since $ Cl_{q, 2^in'} (a) $  and $ Cl_{q, 2^in'} (a) +2n'$ are disjoint sets of the same size $ {\rm ord}_{{\Theta}_{2^in'}(a)}(q)$, we have  $|Cl_{q, 2^{i+1}n'} (a) | = {\rm ord}_{{\Theta_{2^{i+1}n'}(a)}}(q)= 2  {\rm ord}_{{\Theta_{2^{i}n'}(a)}}(q)  = |Cl_{q, 2^in'} (a) \cup( Cl_{q, 2^in'} (a) +2^in')|$. 
Therefore, $Cl_{q, 2^{i+1}n'} (a)   = Cl_{q, 2^in'} (a) \cup( Cl_{q, 2^in'} (a) +2^in')$  as desired.

	To prove $ii)$, assume that  $\lambda>0$.    For  $a)$,  suppose that $1\in Cl_{q, 2^{\lambda+ \beta-1}n'} (1+2^ {\lambda+ \beta-2}n')$.  If $\lambda=1$,  then  $\lambda+\beta-1 = \beta$, 
		we have   ${\rm ord}_{2^{\lambda+\beta-1}}(q) =2={\rm ord}_{2^{\lambda+\beta-2}}(q) $  by Lemma \ref{ord2iq3}. 	Since $2||{\rm ord}_{ n'}(q) $, we have $\frac{{\rm ord}_{ n'}(q)}{2} $ is odd  and it  follows that $ {\rm ord}_{{ {2^{  \lambda+\beta-1 }n'}}}(q)  ={\rm lcm} ( {\rm ord}_{2^{\lambda+\beta-1 }}(q) ,  {\rm ord}_{{{n'}}}(q) )
		={\rm lcm} ( {\rm ord}_{2^{\lambda+\beta-2}}(q) ,  {\rm ord}_{{\Theta_{n'}(a)}}(q) ) =  {\rm ord}_{{\Theta_{2^{\lambda+\beta-2}n'}(a)}}(q)$.  
			Assume  that $\lambda\geq 2$. 
		Since $\lambda+\beta-1 \geq \beta+1$, 
		 we have   ${\rm ord}_{2^{\lambda+\beta-1}}(q) = 2^\lambda $  and  ${\rm ord}_{2^{\lambda+\beta-2}}(q)=2^{ \lambda -1}  $ by Lemma \ref{ord2iq3}.
		 Since $2^\lambda|| {\rm ord}_ {n'}(q) $, it follows that   \begin{align*}
		 {\rm ord}_{{{2^{\lambda+\beta-1}n'}}}(q)  &={\rm lcm} ( {\rm ord}_{2^{ \lambda+\beta-1}}(q) ,  {\rm ord}_{ n'}(q) ) \\&={\rm lcm} (  2^\lambda,  {\rm ord}_{ n'}(q) ) 
		\\& ={\rm lcm} (  2^{\lambda-1},  {\rm ord}_{ n'}(q) ) 
		\\& ={\rm lcm} ( {\rm ord}_{2^{ \lambda+\beta-2}}(q) ,  {\rm ord}_{ n'}(q) ) 
		\\& = {\rm ord}_{{{2^{\lambda+\beta-2}n'}}}(q). 
		 \end{align*} 
		 	 Since  \  $2^\lambda||{\rm ord}_{ n'}(q) $,     $\frac{{\rm ord}_{ n'}(q)}{2^\lambda} $ is odd.  Hence,   $ {\rm ord}_{{ {2^{\lambda+\beta-1}n'}}}(q)  ={\rm lcm} ( {\rm ord}_{2^{\lambda+\beta-1}}(q) ,  {\rm ord}_{ n'}(q) )  $ $
		={\rm lcm} ( {\rm ord}_{2^{\lambda+\beta-2}}(q) ,  {\rm ord}_{ n'}(q) ) =  {\rm ord}_{{ {2^{\lambda+\beta-2}n'}}}(q)$.   	Since  $1+2^{\lambda+\beta-2} n'\not  \equiv 1\,  ({\rm mod}\, 2^{{\lambda+\beta-1}}n')$,  we have  $1+2^{\lambda+\beta-2}n' \equiv q^j\,  ({\rm mod}\, 2^{{\lambda+\beta-1}}n')$ for some $0<j<   {\rm ord}_{{\Theta_{2^{{\lambda+\beta-1}}n'}(1)}}(q)=  {\rm ord}_{{{2^{{\lambda+\beta-1}}n'}}}(q)$. It follows that $1 \equiv q^j\,  ({\rm mod}\, 2^{{\lambda+\beta-2}}n')$ which implies that  ${\rm ord}_{{{2^{{\lambda+\beta-2}}n'}}}(q) \leq  j<     {\rm ord}_{{{2^{{\lambda+\beta-1}}n'}}}(q)={\rm ord}_{{{2^{{\lambda+\beta-2}}n'}}}(q) $, a contradiction. Therefore, $ Cl_{q, 2^{ \lambda+\beta-1}n'} (1) \ne  Cl_{q, 2^{\lambda+\beta-1}n'} (1+2^{\lambda+\beta-2}n')$ as desired. . 
	
  To prove $b)$,   
	let $a$ be an odd integer and let $i$ be an integer such that $i\geq \lambda+\beta-1$.       Then  $i  \geq \beta$ which implies that ${\rm ord}_{2^{i+1}}(q)= 2 {\rm ord}_{2^i}(q)$  and  ${\rm ord}_{2^i}(q)=2^{i-\beta+1} \geq 2^\lambda $   by Lemma \ref{ord2iq3}. 
 	Since $2^\lambda|| {\rm ord}_ {n'}(q) $,  $ \frac{{\rm ord}_{ n'}(q)}{2^\lambda} $ is odd and  \begin{align*}
 	{\rm ord}_{{{2^{i+1}n'}}}(q)  &={\rm lcm} ( {\rm ord}_{2^{i+1}}(q) ,  {\rm ord}_{ n'}(q) ) 
 	\\ &={\rm lcm} ( 2{\rm ord}_{2^{i}}(q) ,  {\rm ord}_{ n'}(q) ) 
 	\\& ={\rm lcm} ( 2{\rm ord}_{2^{i}}(q) ,  \frac{{\rm ord}_{ n'}(q)}{2^\lambda} ) 
 \\&	=2{\rm lcm} ( {\rm ord}_{2^{i}}(q) ,  \frac{{\rm ord}_{ n'}(q)}{2^\lambda} ) 
 \\&	=2{\rm lcm} ( {\rm ord}_{2^{i}}(q) ,  {\rm ord}_{ n'}(q) ) 
 \\&	=2  {\rm ord}_{{{2^{i}n'}}}(q)
 	\end{align*}  which implies that  $a q^{ {\rm ord}_{{{2^{i}n'}}}(q)} \not\equiv   a  \, ({\rm mod}\, 2^{i+1}n')$. 
	Since 	$a q^{ {\rm ord}_{{{2^{i}n'}}}(q)} \equiv   a  \, ({\rm mod}\, 2^{i}n')$, we have  	  $a q^{ {\rm ord}_{{{2^{i}n'}}}(q)} \equiv   a +2^in'  \, ({\rm mod}\, 2^{i+1}n')$.    Hence, $a+2^in'\in Cl_{q, 2^{i+1}n'} (a)$ which implies that   $Cl_{q, 2^{i+1}n'} (a)   =Cl_{q, 2^{i+1}n'} (a+2n')$.  The first equality  holds.

	For the second equality, let $b\in  Cl_{q, 2^{i+1}n'} (a)$.  Then   $b\equiv aq^j \, ({\rm mod }\, 2^{i+1}n')$  for some $0\leq j< {\rm ord}_{{\Theta_{2^{i+1}n'}(a)}}(q)$.   It follows that   $b\equiv aq^j \, ({\rm mod }\, 2^{i}n')$.   If $b<2^in'$, then $b\in Cl_{q, 2^in'} (a)$. Otherwise, $b-2^in'  \in Cl_{q, 2^in'} (a)$ which implies that   $b\in Cl_{q, 2^in'} (a)  +2^in'$. Hence,  $Cl_{q, 2^{i+1}n'} (a)   \subseteq  Cl_{q, 2^in'} (a) \cup( Cl_{q, 2^in'} (a) +2^in')$. 
	Since $ Cl_{q, 2^in'} (a) $  and $ Cl_{q, 2^in'} (a) +2^in'$ are disjoint sets of the same size $ {\rm ord}_{{\Theta}_{2^in'}(a)}(q)$, we have  $|Cl_{q, 2^{i+1}n'} (a) | = {\rm ord}_{{\Theta_{2^{i+1}n'}(a)}}(q)= 2  {\rm ord}_{{\Theta_{2^{i}n'}(a)}}(q)  = |Cl_{q, 2^in'} (a) \cup( Cl_{q, 2^in'} (a) +2^in')|$. 
	Therefore, $Cl_{q, 2^{i+1}n'} (a)   = Cl_{q, 2^in'} (a) \cup( Cl_{q, 2^in'} (a) +2^in')$  as desired.
\end{proof}

 \subsection{$q \equiv 1 \ ({\rm mod }\ 4) $}
 Here, we investigate  properties of $q$-cyclotomic  cosets in the case where $q \equiv 1 \ ({\rm mod }\ 4) $.  We begin with an explicit formula  for  ${\rm ord}_{2^i}(q)$.

\begin{lemma} \label{ord2iq1}Let  $q$ be an odd prime power and let $\beta$ be   the positive integer such that $2^\beta|| (q^2-1)$.   Let $i$ be a positive integer.   If $q \equiv 1 \ ({\rm mod }\ 4) $, then 
	\[{\rm ord}_{2^{i}}(q)= \begin{cases}
	1 &  \text{ if  } 1\leq i\leq \beta-1,\\ 
	2^{i-\beta+1}& \text{  if  }i\geq \beta.  
	\end{cases} \]
\end{lemma}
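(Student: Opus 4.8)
The plan is to compute the multiplicative order of $q$ modulo $2^i$ directly from the $2$-adic factorization of $q^{2^j}-1$, exactly as in the proof of Lemma~\ref{ord2iq3} but adapted to the case $q\equiv 1\ (\mathrm{mod}\ 4)$. The crucial structural difference from the previous lemma is this: since $\beta$ is defined by $2^\beta\,||\,(q^2-1)=(q-1)(q+1)$ and here $q\equiv 1\ (\mathrm{mod}\ 4)$ forces $2\,||\,(q+1)$, we get $2^{\beta-1}\,||\,(q-1)$. Thus $q\equiv 1\ (\mathrm{mod}\ 2^{\beta-1})$ but $q\not\equiv 1\ (\mathrm{mod}\ 2^\beta)$, which already settles the first case: for $1\le i\le\beta-1$ we have $2^i\mid(q-1)$, so $q\equiv 1\ (\mathrm{mod}\ 2^i)$ and therefore ${\rm ord}_{2^i}(q)=1$.

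For the second case $i\ge\beta$, I would proceed by a lifting-the-exponent style induction on the exponent. First I would establish the base identity at level $\beta$: since $2^{\beta-1}\,||\,(q-1)$ but $2^\beta\nmid(q-1)$, the smallest power $t$ with $2^\beta\mid(q^t-1)$ is $t=2$, giving ${\rm ord}_{2^\beta}(q)=2=2^{\beta-\beta+1}$. Then, using $q^{2^{j}}-1=(q-1)\prod_{l=0}^{j-1}(q^{2^{l}}+1)$ together with the observation that $2\,||\,(q^{2^{l}}+1)$ for every $l\ge 1$ (because $q^{2^{l}}\equiv 1\ (\mathrm{mod}\ 4)$ when $q\equiv 1\ (\mathrm{mod}\ 4)$, so $q^{2^l}+1\equiv 2\ (\mathrm{mod}\ 4)$), I would track the exact power of $2$ dividing each $q^{2^{j}}-1$. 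Writing out the telescoping product yields $2^{\,\beta-1+j}\,||\,(q^{2^{j}}-1)$, so the least exponent making $2^i\mid(q^{\text{exp}}-1)$ is a power of $2$, and one reads off ${\rm ord}_{2^i}(q)=2^{\,i-\beta+1}$ for $i\ge\beta$.

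The main obstacle is the careful bookkeeping of $2$-adic valuations, specifically verifying that the multiplicative order is genuinely a power of $2$ and that no intermediate (non-power-of-two) exponent can prematurely bring $2^i$ into $q^t-1$. This is handled by noting that ${\rm ord}_{2^i}(q)$ always divides $\phi(2^i)=2^{i-1}$ and hence must itself be a power of $2$; consequently it suffices to determine the least $j$ with $2^i\mid(q^{2^{j}}-1)$, and the valuation formula $v_2(q^{2^{j}}-1)=\beta-1+j$ gives $j=i-\beta+1$ directly. I expect the argument to mirror Lemma~\ref{ord2iq3} closely, with the only real care needed being the boundary at $i=\beta-1$ versus $i=\beta$, where the order jumps from $1$ to $2$; once the valuation identity is in hand, both cases of the piecewise formula follow immediately.
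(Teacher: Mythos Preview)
Your proposal is correct and follows essentially the same approach as the paper: deduce $2^{\beta-1}\,\|\,(q-1)$ from $q\equiv 1\ (\mathrm{mod}\ 4)$ to settle the case $1\le i\le\beta-1$, and for $i\ge\beta$ use the telescoping factorization $q^{2^{j}}-1=(q-1)\prod_{l=0}^{j-1}(q^{2^{l}}+1)$ together with $2\,\|\,(q^{2^{l}}+1)$ to compute the exact $2$-adic valuation. Your added remark that ${\rm ord}_{2^i}(q)\mid\phi(2^i)=2^{i-1}$, so only power-of-two exponents need be checked, makes explicit a step the paper leaves implicit, but the overall argument is the same.
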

\begin{proof}
	Assume that $q \equiv 1 \ ({\rm mod }\ 4) $.  Then $2^{\beta-1}||(q-1)$   which implies that   ${\rm ord}_{2^{i}}(q)= 1$  for all $1\leq i\leq \beta-1$.  Next, assume that   $i\geq \beta $.  Since $q \equiv 1 \ ({\rm mod }\ 4) $,  it follows that  $q^{2^j} \equiv 1 \ ({\rm mod }\ 4) $ for all $j\geq 0$.  Hence, $2||(q^{2^{j}} +1) $  for all $j\geq 0$. 
	Since   
	$(q^{2^{i-\beta}} -1)(q^{2^{i-\beta}} +1)=
	q^{2^{i-\beta+1}} -1= (q -1) \prod\limits_{j=0}^{i-\beta}(q^{2^{j}} +1) $, it can be concluded that   ${2^{i}}||(q^{2^{\beta-i+1}} -1)$  and  $2^{i} \nmid (q^{2^{t}} -1)$  for all $t\leq \beta+i$. As desired, we have 
	${\rm ord}_{2^{i}}(q)= 2^{i-\beta+1}$  for all $i\geq \beta$.
\end{proof}

\begin{proposition} \label{propQ1} Let $q$ be a prime  power such that $q \equiv 1 \ ({\rm mod }\ 4) $ and let $n'$ be an odd positive integer such that $\gcd(q,n')=1$.    Let  $\lambda\geq 0$ be the integer such that $2^\lambda||  {\rm ord}_{n'}(q)$ and let  $\beta$ be   the positive integer such that $2^\beta| |(q^2-1)$.       Then following statements hold.

	\begin{enumerate}[$i)$]
		\item If $\lambda=0$, then 
		\begin{enumerate}[$a)$]
			\item 	  $Cl_{q,2^{i+1}n'}(a)\neq Cl_{q,2^{i+1}n'}(a+2^in')$   for all odd  integers $a$ and   integers  $1\leq i\leq \beta-2.$
			\item   $Cl_{q, 2^{i+1}n'} (a)   = Cl_{q, 2^{i+1}n'} (a+2^in')   = Cl_{q, 2^{i}n'} (a) \cup(Cl_{q, 2^{i}n'} (a) +2^{i}n')$    for all odd integers $a$ and  integers      $  i \geq \beta-1$. 
		\end{enumerate}
		
		\item If $\lambda>0$, then 
		\begin{enumerate}
			\item  $ Cl_{q, 2^{ \lambda+\beta-1}n'} (1) \ne  Cl_{q, 2^{i\lambda+\beta-1}n'} (1+2^{\lambda+\beta-2}n')$. 
			\item   $Cl_{q, 2^{i+1}n'} (a)   = Cl_{q, 2^{i+1}n'} (a+2^in')   = Cl_{q, 2^{i}n'} (a) \cup ( Cl_{q, 2^{i}n'} (a) +2^{i}n')$    for odd  integers $a$ and    integers $  i \geq \lambda+\beta-1$. 
		\end{enumerate}
	\end{enumerate}
\end{proposition}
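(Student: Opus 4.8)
The plan is to mirror, almost verbatim, the structure of the proof of Proposition \ref{propQ3}, replacing the order formula of Lemma \ref{ord2iq3} by that of Lemma \ref{ord2iq1}. I would first record the standing observations that $\beta\geq 3$, $2^{\beta-1}||(q-1)$ and $2||(q+1)$, and that for every odd integer $a$ one has $\Theta_{2^{i+1}n'}(a)=2^{i+1}\Theta_{n'}(a)$ with $\Theta_{n'}(a)\mid n'$ odd, so that $|Cl_{q,2^{i+1}n'}(a)|={\rm ord}_{2^{i+1}\Theta_{n'}(a)}(q)={\rm lcm}({\rm ord}_{2^{i+1}}(q),{\rm ord}_{\Theta_{n'}(a)}(q))$, the two arguments of the lcm being coprime (a power of $2$ and an odd number). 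These reductions convert every assertion about cosets into a statement about $2$-adic valuations of multiplicative orders that can be read off from Lemma \ref{ord2iq1}.

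For part $i)$, where $\lambda=0$ and hence ${\rm ord}_{n'}(q)$ and every ${\rm ord}_{\Theta_{n'}(a)}(q)$ are odd: in $a)$, for $1\leq i\leq \beta-2$ both $i$ and $i+1$ lie in $\{1,\dots,\beta-1\}$, so ${\rm ord}_{2^i}(q)={\rm ord}_{2^{i+1}}(q)=1$ by Lemma \ref{ord2iq1} and therefore $|Cl_{q,2^{i+1}n'}(a)|={\rm ord}_{\Theta_{n'}(a)}(q)=|Cl_{q,2^{i}n'}(a)|$. Assuming $Cl_{q,2^{i+1}n'}(a)=Cl_{q,2^{i+1}n'}(a+2^in')$ yields an exponent $0<j<{\rm ord}_{\Theta_{2^{i+1}n'}(a)}(q)$ with $a+2^in'\equiv aq^j\pmod{2^{i+1}n'}$; reducing modulo $2^in'$ forces $j\geq {\rm ord}_{\Theta_{2^in'}(a)}(q)={\rm ord}_{\Theta_{2^{i+1}n'}(a)}(q)$, a contradiction, exactly as in Proposition \ref{propQ3}$i)a)$. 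In $b)$, for $i\geq \beta-1$ Lemma \ref{ord2iq1} gives ${\rm ord}_{2^{i+1}}(q)=2\,{\rm ord}_{2^i}(q)$ (the boundary $i=\beta-1$, where ${\rm ord}_{2^{\beta-1}}(q)=1$ and ${\rm ord}_{2^\beta}(q)=2$, must be checked separately), and oddness of ${\rm ord}_{n'}(q)$ yields ${\rm ord}_{2^{i+1}n'}(q)=2\,{\rm ord}_{2^in'}(q)$. Consequently $q^{{\rm ord}_{2^in'}(q)}\equiv 1+2^in'\pmod{2^{i+1}n'}$, and since $a$ is odd, $aq^{{\rm ord}_{2^in'}(q)}\equiv a+2^in'\pmod{2^{i+1}n'}$, giving the first equality; the second then follows from the inclusion $Cl_{q,2^{i+1}n'}(a)\subseteq Cl_{q,2^in'}(a)\cup(Cl_{q,2^in'}(a)+2^in')$ together with the cardinality identity $|Cl_{q,2^{i+1}n'}(a)|=2|Cl_{q,2^in'}(a)|$.

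For part $ii)$, where $\lambda>0$ and I would write ${\rm ord}_{n'}(q)=2^\lambda u$ with $u$ odd: in $a)$, Lemma \ref{ord2iq1} gives ${\rm ord}_{2^{\lambda+\beta-1}}(q)=2^\lambda$ and ${\rm ord}_{2^{\lambda+\beta-2}}(q)=2^{\lambda-1}$ uniformly for all $\lambda\geq 1$, so both ${\rm ord}_{2^{\lambda+\beta-1}n'}(q)$ and ${\rm ord}_{2^{\lambda+\beta-2}n'}(q)$ equal ${\rm lcm}(2^\lambda,2^\lambda u)={\rm ord}_{n'}(q)$; the assumption $1\in Cl_{q,2^{\lambda+\beta-1}n'}(1+2^{\lambda+\beta-2}n')$ then produces the same contradiction after reducing modulo $2^{\lambda+\beta-2}n'$. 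In $b)$, for $i\geq \lambda+\beta-1$ one has $i\geq\beta$ and ${\rm ord}_{2^i}(q)=2^{i-\beta+1}\geq 2^\lambda$; since $2^\lambda||{\rm ord}_{n'}(q)$, the surplus factor of $2$ in ${\rm ord}_{2^{i+1}}(q)$ can be pulled out of the lcm to give ${\rm ord}_{2^{i+1}n'}(q)=2\,{\rm ord}_{2^in'}(q)$, and the argument concludes exactly as in $i)b)$. The routine part is the bookkeeping of $2$-adic valuations inside the various lcm's; the only genuinely delicate point is guaranteeing that the order actually doubles at each relevant step, which is where the coprimality of the $2$-power and odd parts and the hypothesis $2^\lambda||{\rm ord}_{n'}(q)$ are used. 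I expect the main obstacle to be handling the boundary index $i=\beta-1$ in part $i)b)$ and confirming that, in contrast to the $q\equiv 3\ ({\rm mod}\ 4)$ case, the clean two-branch formula of Lemma \ref{ord2iq1} removes the need for a $\lambda=1$ versus $\lambda\geq 2$ case split in part $ii)a)$.
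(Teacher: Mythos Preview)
Your proposal is correct and follows essentially the same approach as the paper: the paper's own proof is little more than the instruction to rerun the argument of Proposition~\ref{propQ3} with Lemma~\ref{ord2iq1} in place of Lemma~\ref{ord2iq3}, after recording the key order identities ${\rm ord}_{\Theta_{2^{i+1}n'}(a)}(q)={\rm ord}_{\Theta_{2^{i}n'}(a)}(q)$ in the ``non-doubling'' ranges and ${\rm ord}_{2^{i+1}n'}(q)=2\,{\rm ord}_{2^{i}n'}(q)$ in the ``doubling'' ranges, which is exactly what you do. Your observation that Lemma~\ref{ord2iq1} gives ${\rm ord}_{2^{\lambda+\beta-1}}(q)=2^\lambda$ and ${\rm ord}_{2^{\lambda+\beta-2}}(q)=2^{\lambda-1}$ uniformly for all $\lambda\geq 1$, thereby eliminating the $\lambda=1$ versus $\lambda\geq 2$ case split needed in Proposition~\ref{propQ3}, is correct and is a small streamlining beyond a mechanical translation.
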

\begin{proof}

	First, we observe  that $\beta\geq 3$, $2||(q+1)$ and $2^{\beta-1}||(q-1)$.  Using Lemma \ref{ord2iq1}  and arguments similar to those in the proof of  Proposition \ref{propQ3}, it can be deduced  the following key results.   
	\begin{enumerate}[$1)$]
		\item  If $\lambda=0$, then  ${\rm ord}_{{\Theta_{2^{i+1}n'}(a)}}(q) = {\rm ord}_{{\Theta_{2^{i}n'}(a)}}(q)$ for all odd integers $a$ and integers  $1\leq i \leq \beta-2$, and 
		$ {\rm ord}_{{{2^{i+1}n'}}}(q)   =2  {\rm ord}_{{{2^{i}n'}}}(q)$ for  all integers $ i \geq \beta-1$. 
		\item  If $\lambda>0$, then  $ {\rm ord}_{{ {2^{\lambda+\beta-1}n'}}}(q)   =  {\rm ord}_{{ {2^{\lambda+\beta-2}n'}}}(q)$, and 
		$ {\rm ord}_{{{2^{i+1}n'}}}(q)   =2  {\rm ord}_{{{2^{i}n'}}}(q)$  for all integers $i\geq  \lambda+\beta-1$. 
	\end{enumerate}
	The complete proof can be  obtained using the arguments similar to those in Proposition \ref{propQ3}  while the above discussion and Lemma \ref{ord2iq1} is applied instead of Lemma \ref{ord2iq3}.	
\end{proof}

\section{Factorization of $x^n+1$ over Finite Fields}\label{sec4}
In this section,     the factorization of  $x^{2^in'}+1$  over $\mathbb{F}_q$ is established. First, we prove that there exists a positive integer $k$ such that  the number of monic irreducible factors of $x^{2^in'}+1$  over $\mathbb{F}_q$ becomes a constant for all   integers $i\geq k$.
In the case where  ${\rm ord}_{n'}(q)$ is odd,    a complete recursive factorization of $x^{2^in'}+1$  over $\mathbb{F}_q$   is given together with a recursive formula for the number of its monic irreducible factors   for all positive integers $i$ in Subsection  \ref{subsec4.2}.  In the case where  ${\rm ord}_{n'}(q)$ is even,  a  recursive factorization of $x^{2^in'}+1$  over $\mathbb{F}_q$  is given  as well as a recursive formula for the number of  its monic irreducible factors  for all   integers $i\geq k$  in Subsection  \ref{subsec4.1}.

\subsection{Recursive Factorization  of $x^n+1$ over $\mathbb{F}_q$ with  Odd ${\rm ord}_{n'}(q)$} \label{subsec4.2}
In this subsection, we established a complete recursive factorization of  $x^{2^in'}+1$  over $\mathbb{F}_q$  in the case where  ${\rm ord}_{n'}(q)$ is odd.  Subsequently, a  formula for the number of monic irreducible factors of $x^{2^in'}+1$  over $\mathbb{F}_q$ is given  recursively on~$i$.

\subsubsection{$q \equiv 3 \ ({\rm mod }\ 4) $}
 We begin with  useful relations between  $q$-cyclotomic cosets  and their    induced  polynomials   for  the case $q \equiv 3 \ ({\rm mod }\ 4) $. 
\begin{lemma}   \label{lemClPoly3} Let $q$ be a prime  power such that $q \equiv 3 \ ({\rm mod }\ 4) $ and let $n'$ be an odd positive integer such that $\gcd(q,n')=1 $ and   $ {\rm ord}_{n'}(q)$  is odd.     Let     $\beta$ be   the positive integer such that $2^\beta| |(q^2-1)$.   Let  $i$ be  a positive integer  and let 	$a$ be an odd integer.  Then one of  the following statements holds.

		\begin{enumerate}[$i)$] 
			
			\item  $Cl_{q,2^{i+1}n'}(a)$ and  $ Cl_{q,2^{i+1}n'}(a+2^in')$  induce distinct monic  irreducible polynomials of   degree $| Cl_{q,2^{i}n'}(a)|$ for all     $2\leq i\leq \beta-1.$
			\item  For each  $  i =1$ or  $i\geq \beta$, if   $f(x) $ is  induced by $Cl_{q,2^{i}n'}(a)$, then   $Cl_{q, 2^{i+1}n'} (a)   $     induces  $f(x^2)$.
		\end{enumerate}    
\end{lemma}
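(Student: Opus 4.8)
The plan is to read off both claims from the purely combinatorial description of the cosets already obtained in Proposition~\ref{propQ3}$(i)$ (the hypothesis that $\mathrm{ord}_{n'}(q)$ is odd is exactly the case $\lambda=0$), and then to convert each coset identity into a statement about the induced polynomials via \eqref{fa}. The one external ingredient I need is the link between roots of unity of the two relevant orders: fixing a primitive $2^{i+1}n'$th root of unity $\alpha$, the element $\gamma:=\alpha^2$ is a primitive $2^in'$th root of unity, and $\alpha^{2^in'}$ is a primitive square root of $1$, so $\alpha^{2^in'}=-1$ and hence $-\alpha^j=\alpha^{j+2^in'}$ for every integer $j$.

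For part $(i)$, I would fix an odd $a$ and $2\le i\le\beta-1$. By Proposition~\ref{propQ3}$(i)(a)$ the cosets $Cl_{q,2^{i+1}n'}(a)$ and $Cl_{q,2^{i+1}n'}(a+2^in')$ are distinct, and distinct $q$-cyclotomic cosets modulo the same integer have disjoint root sets $\{\alpha^j\}$, so the two induced monic irreducible polynomials share no root and are therefore distinct. For the degree, I would invoke the order computation carried out inside the proof of Proposition~\ref{propQ3}$(i)(a)$, namely $\mathrm{ord}_{\Theta_{2^{i+1}n'}(a)}(q)=\mathrm{ord}_{\Theta_{2^{i}n'}(a)}(q)$; since the degree of an induced polynomial equals the cardinality of its coset, this yields $\deg=|Cl_{q,2^{i+1}n'}(a)|=|Cl_{q,2^{i}n'}(a)|$, and Lemma~\ref{lemCardinal} gives the same degree for the companion coset $Cl_{q,2^{i+1}n'}(a+2^in')$.

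For part $(ii)$, I would fix an odd $a$ with $i=1$ or $i\ge\beta$, and let $f$ be the polynomial induced by $Cl_{q,2^{i}n'}(a)$, computed with the primitive $2^in'$th root $\gamma=\alpha^2$, so that $f(x)=\prod_{j\in Cl_{q,2^{i}n'}(a)}(x-\alpha^{2j})$. Substituting $x^2$ and factoring each quadratic gives
$$f(x^2)=\prod_{j\in Cl_{q,2^{i}n'}(a)}(x^2-\alpha^{2j})=\prod_{j\in Cl_{q,2^{i}n'}(a)}(x-\alpha^{j})(x-\alpha^{j+2^in'}),$$
where the last equality uses $-\alpha^j=\alpha^{j+2^in'}$. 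The exponents occurring here, reduced modulo $2^{i+1}n'$, form exactly $Cl_{q,2^{i}n'}(a)\cup\bigl(Cl_{q,2^{i}n'}(a)+2^{i}n'\bigr)$, which by Proposition~\ref{propQ3}$(i)(b)$ equals $Cl_{q,2^{i+1}n'}(a)$. Hence $f(x^2)=\prod_{m\in Cl_{q,2^{i+1}n'}(a)}(x-\alpha^m)$ is precisely the polynomial induced by $Cl_{q,2^{i+1}n'}(a)$, as claimed; its irreducibility is automatic, since an induced polynomial is by definition a minimal polynomial over $\mathbb{F}_q$.

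I expect the bookkeeping in part $(ii)$ to be the only real obstacle: one must check that the two square roots $\alpha^j$ and $\alpha^{j+2^in'}$ of $\gamma^j$ correspond to genuinely distinct residues modulo $2^{i+1}n'$, and that doubling the coset $Cl_{q,2^{i}n'}(a)$ reproduces the union in Proposition~\ref{propQ3}$(i)(b)$ without collision, so that the displayed product has the correct degree $2\,|Cl_{q,2^{i}n'}(a)|$ and no repeated factors. Everything else reduces to the already-established coset identities of Proposition~\ref{propQ3}, Lemma~\ref{lemCardinal}, and the standard dictionary between cosets and their induced irreducible polynomials.
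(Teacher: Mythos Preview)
Your proposal is correct and follows essentially the same approach as the paper's proof: both parts are read off from Proposition~\ref{propQ3}$(i)$ and Lemma~\ref{lemCardinal}, with part~$(ii)$ handled by choosing a primitive $2^{i+1}n'$th root $\alpha$, using $\alpha^{2^in'}=-1$, and matching the product $\prod_j(x-\alpha^j)(x+\alpha^j)$ against the coset decomposition $Cl_{q,2^{i}n'}(a)\cup(Cl_{q,2^{i}n'}(a)+2^{i}n')=Cl_{q,2^{i+1}n'}(a)$. The only cosmetic difference is that the paper runs the chain of equalities in the opposite direction (from the coset product toward $f(x^2)$ rather than from $f(x^2)$ toward the coset product).
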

\begin{proof}
	To prove $i)$, assume that  $2\leq i\leq \beta-1$.   By Proposition \ref{propQ3} $i.a)$,   we have $Cl_{q,2^{i+1}n'}(a)\neq Cl_{q,2^{i+1}n'}(a+2^in')$.  From Lemma \ref{lemCardinal}, it follows that $| Cl_{q,2^{i+1}n'}(a)|=| Cl_{q,2^{i+1}n'}(a+2^in')|$ which equals to   $| Cl_{q,2^{i}n'}(a)|$ by  the proof of Proposition \ref{propQ3} $i.a)$. Hence,   $Cl_{q,2^{i+1}n'}(a)$ and  $ Cl_{q,2^{i+1}n'}(a+2^in')$  induce distinct monic  irreducible polynomials of   degree $| Cl_{q,2^{i}n'}(a)|$. 
	
	To proof $ii)$, assume that   $  i =1$ or $i\geq \beta$.  Assume that $f(x) $ is  induced by $Cl_{q,2^{i}n'}(a)$. Let $\alpha$ be a $2^{i+1} n'$th root of unity. Then $\alpha^2$ is a $2^{i} n'$th root of unity  and $f(x)=\prod\limits_{j\in Cl_{q,2^{i}n'}(a)} (x-(\alpha^2)^j)$.  From Proposition \ref{propQ3} $i.b)$,  we have  $Cl_{q, 2^{i+1}n'} (a)     = Cl_{q, 2^{i}n'} (a) \cup(Cl_{q, 2^{i}n'} (a) +2^{i}n')$.  It follows that 
	\begin{align*}
	\prod_{j\in Cl_{q,2^{i+1}n'}(a)} (x-\alpha^j)&= 	\prod_{j\in Cl_{q, 2^{i}n'} (a) \cup\{ Cl_{q, 2^{i}n'} (a) +2^{i}n'\}} (x-\alpha^j)\\
	&= 	\prod_{j\in Cl_{q, 2^{i}n'} (a)  } (x-\alpha^j) \times 	\prod_ {j\in \{ Cl_{q, 2^{i}n'} (a) +2^{i}n'\} }(x-\alpha^j)\\
	&= 	\prod_{j\in Cl_{q, 2^{i}n'} (a)  } (x-\alpha^j) (x-\alpha^{j+2^in'})\\
		&= 	\prod_{j\in Cl_{q, 2^{i}n'} (a)  } (x-\alpha^j) (x+\alpha^{j})\\
			&= 	\prod_{j\in Cl_{q, 2^{i}n'} (a)  } (x-\alpha^{2j}) \\
			&=f(x^2).
	\end{align*}
	Therefore,  $Cl_{q, 2^{i+1}n'} (a)   $     induces  $f(x^2)$  as desired. 
\end{proof}
The next corollary can be deduced directly from the above lemma.
\begin{corollary}  Assume the notations as in Lemma \ref{lemClPoly3}  with  $i\geq \beta$.  
If   $f(x) $ is  induced by $Cl_{q,2^{i}n'}(a)$, then    $f(x^{2^j})$  is irreducible  for all   $j\geq \beta -i$. 
\end{corollary}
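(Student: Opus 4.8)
The plan is to prove the statement by induction on $j$, feeding the output of part $ii)$ of Lemma~\ref{lemClPoly3} back into its hypothesis at each step. First I would record that, since $i\geq\beta$, we have $\beta-i\leq 0$, so the range $j\geq\beta-i$ is just the set of all non-negative integers; it therefore suffices to show that $f(x^{2^j})$ is irreducible for every $j\geq 0$. The essential preliminary observation is that any polynomial induced by a $q$-cyclotomic coset is, by its description in \eqref{fa}, the minimal polynomial of a root of unity over $\mathbb{F}_q$, and hence monic and irreducible. This is the fact that converts the word ``induces'' into the word ``irreducible'' at every stage of the induction.

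For the induction I would carry along a slightly stronger assertion, namely that for every $j\geq 0$ the polynomial $f(x^{2^j})$ is precisely the one induced by $Cl_{q,2^{i+j}n'}(a)$. The base case $j=0$ is exactly the hypothesis that $f(x)=f(x^{2^0})$ is induced by $Cl_{q,2^{i}n'}(a)$, which is irreducible by the preliminary observation. For the inductive step, suppose $f(x^{2^j})$ is induced by $Cl_{q,2^{i+j}n'}(a)$. Since $i+j\geq i\geq\beta$, part $ii)$ of Lemma~\ref{lemClPoly3} applies with $i$ replaced by $i+j$ and with the role of $f$ played by $f(x^{2^j})$: it yields that $Cl_{q,2^{i+j+1}n'}(a)$ induces the polynomial obtained by substituting $x^2$ for $x$ in $f(x^{2^j})$, which is $f(x^{2^{j+1}})$. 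By the preliminary observation this polynomial is irreducible, and it is induced by $Cl_{q,2^{i+j+1}n'}(a)$, closing the induction.

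The only point requiring care is the bookkeeping: in order to re-apply Lemma~\ref{lemClPoly3} one must know not merely that $f(x^{2^j})$ is irreducible but that it is the specific polynomial induced by $Cl_{q,2^{i+j}n'}(a)$, which is exactly why the strengthened inductive hypothesis is needed rather than bare irreducibility. Beyond this the argument is routine, since the index condition $i+j\geq\beta$ demanded by part $ii)$ of the lemma holds automatically for every $j\geq 0$ once $i\geq\beta$ is assumed, so no case distinction among the values of $j$ is necessary.
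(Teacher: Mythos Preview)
Your proof is correct and is precisely the argument the paper has in mind: the paper merely states that the corollary ``can be deduced directly from the above lemma,'' and your induction on $j$ using part $ii)$ of Lemma~\ref{lemClPoly3} (with the strengthened hypothesis that $f(x^{2^j})$ is induced by $Cl_{q,2^{i+j}n'}(a)$) is exactly that direct deduction made explicit. Your observation that $\beta-i\leq 0$ reduces the claim to the case $j\geq 0$ is the right way to interpret the stated range.
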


In order to simplify the notations in the next theorem,  let $ \alpha$ and  $\gamma$  be $2^in'$th and  $2^{i+1}n'$th roots of unity, respectively.   For each  ${a\in SO_{q}(2^in')} $,  let  

\begin{align} \label{fg} f_a(x)=\prod_{j\in Cl_{q, 2^{i}n'} (a)  } (x-\alpha^j) \text{ and } g_j(x)=\prod_{j\in Cl_{q, 2^{i+1}n'} (a)  } (x-\gamma^j) \end{align} be the irreducible polynomials induced by  $Cl_{q, 2^{i}n'} (a) $ and $Cl_{q, 2^{i+1}n'} (a) $, respectively.  Using  these notations,  a recursive factorization of  $x^{2^in'}+1$ is given as follows.

\begin{theorem}  \label{thm-main-Fact3} Let $q$ be a prime  power such that $q \equiv 3 \ ({\rm mod }\ 4) $ and let $n'$ be an odd positive integer such that $\gcd(q,n')=1 $ and   $ {\rm ord}_{n'}(q)$  is odd.    Let     $\beta$ be   the positive integer such that $2^\beta| |(q^2-1)$.  Then   the following statements hold.

	\begin{enumerate}[$i)$]
		\item  If $i=0$,  then \begin{align} x^{2^i n'}+1  = x^{n'}+1   = 
		\prod\limits_{a\in SO_{q}(2n')} f_a(x)\end{align}

		\item If  $i\geq 1$, then 
		\begin{align}
		x^{2^in'}+1=   \begin{cases}
		\prod\limits_{a\in SO_{q}(2^in')} f_{a}(x^2) & \text{ if } i=1 \text{ or } i\geq \beta,\\
		\prod\limits_{a\in SO_{q}(2^in')}g_{a}(x) g_{a+2^in'}(x) & \text{ if } 2\leq i\leq \beta-1,
		\end{cases}
		\end{align} 
		where   $f_a(x)$ and $g_a(x)$ are given  in \eqref{fg}.
	\end{enumerate}	   In this case,  we have 
\[	x^{2^{\beta-1+i}n'}+1=   
\prod\limits_{a\in SO_{q}(2^{\beta}n')}f_{a}(x^{2^i})  \] 
for all $i\geq 0$.
\end{theorem}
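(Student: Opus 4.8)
The plan is to establish the displayed identity by induction on $i$, the engine being the case $i \geq \beta$ of part $ii)$ applied repeatedly. The base case $i=0$ is nothing but \eqref{facn0} with $i$ replaced by $\beta-1$, that is, $x^{2^{\beta-1}n'}+1 = \prod_{a \in SO_q(2^\beta n')} f_a(x)$, since $f_a(x^{2^0}) = f_a(x)$.

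Before carrying out the step, I would isolate two facts that keep the index set fixed as the $2$-exponent grows past $\beta$. First, for every $m \geq \beta$ one may take $SO_q(2^{m+1}n') = SO_q(2^m n')$: by Proposition \ref{propQ3} $i.b)$ the two odd cosets $Cl_{q,2^m n'}(a)$ and $Cl_{q,2^m n'}(a)+2^m n'$ coalesce into the single coset $Cl_{q,2^{m+1}n'}(a)$, so the odd integers $a$ and $a+2^m n'$ represent one and the same coset modulo $2^{m+1}n'$ and the number of odd cosets is unchanged. Iterating from $m=\beta$ yields $SO_q(2^{\beta+j}n') = SO_q(2^\beta n')$ for all $j \geq 0$. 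Second, the monic irreducible polynomial induced by $Cl_{q,2^{\beta+j}n'}(a)$ is exactly $f_a(x^{2^j})$: this is proved by induction on $j$ from Lemma \ref{lemClPoly3} $ii)$, which (being applicable whenever the modulus exponent is at least $\beta$) carries the polynomial induced by $Cl_{q,2^{\beta+j}n'}(a)$ to its substitution $x \mapsto x^2$, turning $f_a(x^{2^j})$ into $f_a(x^{2^{j+1}})$; irreducibility at every stage is furnished by the Corollary following Lemma \ref{lemClPoly3}.

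Granting these, the inductive step is mechanical. Suppose $x^{2^{\beta-1+i}n'}+1 = \prod_{a \in SO_q(2^\beta n')} f_a(x^{2^i})$. Since the exponent $\beta+i$ is at least $\beta$, part $ii)$ gives $x^{2^{\beta+i}n'}+1 = \prod_{a \in SO_q(2^{\beta+i}n')} h_a(x^2)$, where $h_a$ is induced by $Cl_{q,2^{\beta+i}n'}(a)$. By the two facts above the index set is $SO_q(2^\beta n')$ and $h_a(x) = f_a(x^{2^i})$, so $h_a(x^2) = f_a((x^2)^{2^i}) = f_a(x^{2^{i+1}})$ and the right-hand side collapses to $\prod_{a \in SO_q(2^\beta n')} f_a(x^{2^{i+1}})$, which is the claim for $i+1$.

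The one point requiring care is the bookkeeping of the representative sets: I must make sure that the single index set $SO_q(2^\beta n')$ governs every product $x^{2^{\beta-1+i}n'}+1$. This stabilization is precisely the content of the coalescing in Proposition \ref{propQ3} $i.b)$ and explains why the threshold is exactly $\beta$; once it is secured, the repeated substitutions $x \mapsto x^2$ simply accumulate into $x \mapsto x^{2^i}$ and no genuine computation remains.
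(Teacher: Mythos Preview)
Your induction for the concluding display
\[
x^{2^{\beta-1+i}n'}+1=\prod_{a\in SO_q(2^\beta n')}f_a(x^{2^i})
\]
is correct, and the two ``facts'' you isolate (stabilization of $SO_q(2^{m}n')$ for $m\geq\beta$ via Proposition~\ref{propQ3}~$i.b)$, and the iterated substitution $x\mapsto x^2$ via Lemma~\ref{lemClPoly3}~$ii)$) are exactly the ingredients the paper uses. However, your proposal addresses \emph{only} that final display. Parts $i)$ and $ii)$ are themselves assertions of the theorem, and you treat the case $i\geq\beta$ of $ii)$ as a hypothesis (``the engine'') rather than something to be proved; the cases $i=1$ and $2\leq i\leq\beta-1$ of $ii)$, as well as $i)$, are not touched at all. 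Your Facts~1 and~2 do, in effect, yield the $i\geq\beta$ case of $ii)$, so the apparent circularity dissolves --- but the other sub-cases remain unproven.

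The paper's argument covers $i)$ and $ii)$ uniformly: starting from \eqref{facn0}, namely $x^{2^in'}+1=\prod_{a\in SO_q(2^{i+1}n')}f_a(x)$, Proposition~\ref{propQ3}~$i)$ determines $SO_q(2^{i+1}n')$ in terms of $SO_q(2^{i}n')$ --- equal when $i=1$ or $i\geq\beta$, and the disjoint union $SO_q(2^{i}n')\cup(SO_q(2^{i}n')+2^in')$ when $2\leq i\leq\beta-1$ --- and Lemma~\ref{lemClPoly3} identifies the corresponding irreducible factors in each regime. The final display then follows by iterating the $i\geq\beta$ case, which is precisely your induction. To complete the proof you should add the short justifications for $i)$ (immediate from \eqref{facn0}) and for the $i=1$ and $2\leq i\leq\beta-1$ sub-cases of $ii)$.
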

\begin{proof}
From \eqref{facn0}, we note that  
	 \begin{align} x^{2^i n'}+1  =  
	\prod\limits_{a\in SO_{q}(2^{i+1}n')} f_a(x).\end{align} 
	The first statement is  the special case  where $i=0$.
	From Proposition \ref{propQ3} $i)$, it can be deduced that     \begin{align*}
	SO_{q}(2^{i+1}n') =\begin{cases}
	 	SO_{q}(2^{i}n')   &\text{ if }  i=1 \text{ or } i\geq \beta,	\\
	 	SO_{q}(2^{i}n') \cup 	(SO_{q}(2^{i}n')+2^in')  &\text{ if }  2 \leq i\leq \beta-1,
		\end{cases}
		\end{align*}
		where the union is disjoint.   The  results therefore follow from Lemma \ref{lemClPoly3}.
\end{proof}

A recursive  formula for   the  number of monic irreducible factors of $x^{2^in'}+1$ over $\mathbb{F}_q$ follows immediately from the theorem.  

\begin{corollary} \label{corEnumQ3} Let $q$ be a prime  power  such that $q \equiv 3 \ ({\rm mod }\ 4) $ and let $n'$ be an odd positive integer such that $\gcd(q,n')=1 $ and   $ {\rm ord}_{n'}(q)$  is odd.    Let    $i\geq 0$ be an integer and let  $\beta$ be   the positive integer such that $2^\beta| |(q^2-1)$.  Then  
	\begin{align} \label{Nq-1}
	N_q(n')=\sum_{d\mid n'} \frac{\phi(2d)}{{\rm  ord}_{2d}(q)}
	\end{align}	
	and  
	 
		\begin{align} \label{Nq-3}
		N_q(2^in')=\begin{cases}
		N_q(n')   & \text{ if } i=1,\\
		2N_q(2^{i-1}n') = 2^{i-1} N_q(n') & \text{ if } 2\leq i\leq \beta-1,\\
		N_q(2^{\beta-2}n') =2^{\beta-2}N_q(n') & \text{ if } i\geq \beta.
		\end{cases}
		\end{align}
 
\end{corollary}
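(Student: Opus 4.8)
The first formula \eqref{Nq-1} requires no new work: it is precisely the general enumeration formula \eqref{nqn'} evaluated at $i=0$, which gives $N_q(n')=\sum_{d\mid n'}\phi(2d)/{\rm ord}_{2d}(q)$.

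For the recursive formula \eqref{Nq-3}, my plan is to read the number of monic irreducible factors directly off the factorization produced by Theorem \ref{thm-main-Fact3} and then solve the resulting one-step recurrence. The connecting observation is that, by \eqref{facn0}, the monic irreducible factors of $x^{2^in'}+1$ are in bijection with the representatives in $SO_q(2^{i+1}n')$, so that $N_q(2^in')=|SO_q(2^{i+1}n')|$ and, after shifting the index, $|SO_q(2^in')|=N_q(2^{i-1}n')$ for every $i\geq 1$. I would then separate the two regimes of Theorem \ref{thm-main-Fact3} $ii)$. When $i=1$ or $i\geq\beta$, the factorization $x^{2^in'}+1=\prod_{a\in SO_q(2^in')}f_a(x^2)$ exhibits exactly one irreducible factor $f_a(x^2)$ per representative, whence $N_q(2^in')=|SO_q(2^in')|=N_q(2^{i-1}n')$. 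When $2\leq i\leq\beta-1$, the factorization $x^{2^in'}+1=\prod_{a\in SO_q(2^in')}g_a(x)g_{a+2^in'}(x)$ contributes two distinct irreducible factors per representative, the two cosets being distinct by Proposition \ref{propQ3} $i.a)$, whence $N_q(2^in')=2|SO_q(2^in')|=2N_q(2^{i-1}n')$.

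It then remains to solve these two relations starting from the value $N_q(n')$. The case $i=1$ gives $N_q(2n')=N_q(n')$ directly. For $2\leq i\leq\beta-1$ I would iterate the doubling relation down to $i=1$, obtaining $N_q(2^in')=2^{i-1}N_q(2n')=2^{i-1}N_q(n')$; in particular $N_q(2^{\beta-1}n')=2^{\beta-2}N_q(n')$. For $i\geq\beta$ the stabilizing relation $N_q(2^in')=N_q(2^{i-1}n')$ collapses the sequence to its value at $i=\beta-1$, so that $N_q(2^in')=N_q(2^{\beta-1}n')=2^{\beta-2}N_q(n')$, which is the claimed constant; this also matches the closing display of Theorem \ref{thm-main-Fact3}, where $x^{2^{\beta-1+i}n'}+1$ appears as a product of $|SO_q(2^\beta n')|=2^{\beta-2}N_q(n')$ factors for every $i\geq 0$. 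The argument is elementary bookkeeping on top of Theorem \ref{thm-main-Fact3}, so no genuine obstacle is expected; the only points demanding care are that $\beta\geq 3$ (recorded at the start of the proof of Proposition \ref{propQ3}), which guarantees that the doubling regime $2\leq i\leq\beta-1$ is nonempty and that the thresholds in \eqref{Nq-3} line up, and that the base of stabilization is correctly taken to be $N_q(2^{\beta-1}n')=2^{\beta-2}N_q(n')$ so that the constant reported for $i\geq\beta$ comes out as $2^{\beta-2}N_q(n')$.
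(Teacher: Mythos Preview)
Your argument is correct and follows exactly the route the paper intends: equation \eqref{Nq-1} is the $i=0$ instance of \eqref{nqn'}, and \eqref{Nq-3} is read off directly from the factorizations in Theorem \ref{thm-main-Fact3}, which is precisely what the paper's one-line proof asserts. Your careful bookkeeping even pins the stabilization at $N_q(2^{\beta-1}n')=2^{\beta-2}N_q(n')$, which is the right anchor (the printed intermediate expression $N_q(2^{\beta-2}n')$ in the third line of \eqref{Nq-3} appears to be a typographical slip for $N_q(2^{\beta-1}n')$, as your derivation and the paper's Table~\ref{T1} both confirm).
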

\begin{proof}
	Equation \eqref{Nq-1} is a special case of  \eqref{nqn'}.   Equation  \eqref{Nq-3} follows immediately from Theorem \ref{thm-main-Fact3}. 
\end{proof}

\subsubsection{$q \equiv 1 \ ({\rm mod }\ 4) $}
 Here, we focus on  $q \equiv 1 \ ({\rm mod }\ 4) $. First,  some  useful  relations between the $q$-cyclotomic coset $Cl_{q,2^{i+1}n'}(a)$  and its   induced  polynomial are established.

\begin{lemma}   \label{lemClPoly1}  Let $q$ be a prime  power such that $q \equiv 1 \ ({\rm mod }\ 4) $  and let $n'$ be an odd positive integer such that $\gcd(q,n')=1 $ and   $ {\rm ord}_{n'}(q)$  is odd.     Let     $\beta$ be   the positive integer such that $2^\beta| |(q^2-1)$.   Let  $i$ be  a positive integer  and let 	$a$ be an odd integer.  Then one of  the following statements holds.

	\begin{enumerate}[$i)$] 
			\item  $Cl_{q,2^{i+1}n'}(a)$ and  $ Cl_{q,2^{i+1}n'}(a+2^in')$  induce distinct monic  irreducible polynomials of the same degree  for all     $1\leq i\leq \beta-2.$
		\item For each    $  i \geq \beta-1$, if $f(x) $ is   induced by $Cl_{q,2^{i}n'}(a)$, then $Cl_{q, 2^{i+1}n'} (a)   $     induce  $f(x^2)$.
	 \end{enumerate}
\end{lemma}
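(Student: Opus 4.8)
The plan is to prove Lemma \ref{lemClPoly1} by mirroring the structure of the proof of Lemma \ref{lemClPoly3}, substituting Proposition \ref{propQ1} for Proposition \ref{propQ3} and Lemma \ref{ord2iq1} for Lemma \ref{ord2iq3} wherever the order of $q$ modulo a power of $2$ enters. The two statements are formally parallel: in each case part $i)$ asserts that two distinct cosets of equal size induce distinct irreducible polynomials of the same degree, and part $ii)$ asserts that passing from modulus $2^in'$ to $2^{i+1}n'$ doubles the coset via the map $f(x)\mapsto f(x^2)$. The only genuine difference is the index ranges: for $q\equiv 1\ (\mathrm{mod}\ 4)$ the ``stable doubling'' regime starts at $i\geq\beta-1$ rather than $i\geq\beta$, and the ``splitting'' regime is $1\leq i\leq\beta-2$ rather than $2\leq i\leq\beta-1$. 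This shift is exactly the one recorded in Lemma \ref{ord2iq1} versus Lemma \ref{ord2iq3}, so the bookkeeping should transfer cleanly.

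For part $i)$, I would take $a$ odd and $1\leq i\leq\beta-2$. By Proposition \ref{propQ1} $i.a)$ the cosets $Cl_{q,2^{i+1}n'}(a)$ and $Cl_{q,2^{i+1}n'}(a+2^in')$ are distinct, and by Lemma \ref{lemCardinal} they have the same cardinality; hence the induced minimal polynomials are distinct monic irreducibles of the same degree. For part $ii)$, I would fix $i\geq\beta-1$ and suppose $f(x)$ is induced by $Cl_{q,2^in'}(a)$. Letting $\alpha$ be a $2^{i+1}n'$th root of unity, so that $\alpha^2$ is a $2^in'$th root of unity and $f(x)=\prod_{j\in Cl_{q,2^in'}(a)}(x-(\alpha^2)^j)$, I would invoke Proposition \ref{propQ1} $i.b)$ to write $Cl_{q,2^{i+1}n'}(a)=Cl_{q,2^in'}(a)\cup(Cl_{q,2^in'}(a)+2^in')$ as a disjoint union. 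The computation then runs identically to the display in Lemma \ref{lemClPoly3}: the key algebraic fact is that $\alpha^{2^in'}=-1$ (since $\alpha$ is a primitive $2^{i+1}n'$th root of unity and $2^in'$ is exactly half the order), so $\alpha^{j+2^in'}=-\alpha^j$, whence $(x-\alpha^j)(x-\alpha^{j+2^in'})=(x-\alpha^j)(x+\alpha^j)=x^2-\alpha^{2j}$, and collecting over $j$ yields $f(x^2)$.

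I would state this verification as a single \begin{align*}...\end{align*} block identical in form to the one in Lemma \ref{lemClPoly3}, with $Cl_{q,2^in'}(a)$ in place of the $2^i$ subscripts and noting the disjointness of the union guarantees no factor is counted twice. One subtlety worth a sentence: for $i\geq\beta-1$ Proposition \ref{propQ1} $i.b)$ already bundles together \emph{both} the equality $Cl_{q,2^{i+1}n'}(a)=Cl_{q,2^{i+1}n'}(a+2^in')$ and the union decomposition, so unlike the $q\equiv 3$ case there is no separate argument needed to show $a+2^in'$ lands in the same coset — it is handed to us directly.

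The main obstacle, such as it is, is purely bookkeeping rather than mathematical: one must be careful that the boundary index $i=\beta-1$ falls into the doubling case (part $ii)$) and not the splitting case, and that the lower splitting bound $i=1$ is genuinely covered — in particular that the argument does not secretly require $i\geq 2$ anywhere, as the $q\equiv 3$ version did. Since Proposition \ref{propQ1} has already been proved with precisely these ranges, and the cardinality equality from Lemma \ref{lemCardinal} holds for all positive integers $i$, no special handling of small $i$ is required, and the proof is a faithful transcription of the $q\equiv 3$ argument with the index windows shifted by one.
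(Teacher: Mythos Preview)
Your proposal is correct and matches the paper's own approach exactly: the paper's proof of Lemma~\ref{lemClPoly1} is a one-line reference stating that the argument of Lemma~\ref{lemClPoly3} goes through verbatim once Proposition~\ref{propQ1}~$i)$ replaces Proposition~\ref{propQ3}~$i)$, which is precisely what you have spelled out. Your attention to the shifted index windows and to the fact that Lemma~\ref{lemCardinal} already covers $i=1$ is well placed and confirms there is no hidden boundary issue.
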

\begin{proof}
	The proof can be obtained using the arguments similar to those in the proof of Lemma \ref{lemClPoly3} while Proposition    \ref{propQ1} $i)$ is applied instead of    Proposition~\ref{propQ3}~$i)$.
\end{proof}
\begin{corollary}  
	Assume the notations as in Lemma \ref{lemClPoly1}  with  $i\geq \beta-1$.  
	If   $f(x) $ is  induced by $Cl_{q,2^{i}n'}(a)$, then    $f(x^{2^j})$  is irreducible  for all   $j\geq \beta -i-1$. 
\end{corollary}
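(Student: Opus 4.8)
The plan is to prove the claim by induction on $j$, with the inductive engine supplied by Lemma~\ref{lemClPoly1}~$ii)$. The key observation I would maintain throughout is that $f(x^{2^j})$ is not merely some polynomial but is exactly the monic polynomial induced by $Cl_{q,2^{i+j}n'}(a)$; once this identification is carried along the induction, irreducibility comes for free, since every polynomial induced by a $q$-cyclotomic coset is, by its very definition in \eqref{fa}, the minimal polynomial of a root of unity over $\mathbb{F}_q$ and hence irreducible.

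For the base case $j=0$ I would simply note that $f(x^{2^0})=f(x)$ is, by hypothesis, induced by $Cl_{q,2^in'}(a)$ and is therefore irreducible. For the inductive step I would assume that $f(x^{2^j})$ is induced by $Cl_{q,2^{i+j}n'}(a)$. Since $i\geq\beta-1$ and $j\geq 0$, we have $i+j\geq\beta-1$, so Lemma~\ref{lemClPoly1}~$ii)$ applies with $i$ replaced by $i+j$: taking $h(x)=f(x^{2^j})$ for the polynomial induced by $Cl_{q,2^{i+j}n'}(a)$, it yields that $Cl_{q,2^{i+j+1}n'}(a)$ induces $h(x^2)=f\big((x^2)^{2^j}\big)=f(x^{2^{j+1}})$. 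Being an induced polynomial, $f(x^{2^{j+1}})$ is irreducible, which closes the induction and shows that $f(x^{2^j})$ is irreducible for every $j\geq 0$. Finally I would reconcile the index range: the hypothesis $i\geq\beta-1$ forces $\beta-i-1\leq 0$, so the stated condition $j\geq\beta-i-1$ is met by every nonnegative integer $j$, and the assertion proved for all $j\geq 0$ is precisely the corollary.

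I do not anticipate a genuine obstacle here; the argument is a routine transfer of Lemma~\ref{lemClPoly1}~$ii)$ along the tower of moduli $2^{i}n'\mid 2^{i+1}n'\mid\cdots$. The only points requiring care are bookkeeping ones: verifying that the hypothesis $i+j\geq\beta-1$ of the lemma persists at every step (which it does automatically, since $i\geq\beta-1$), and composing the substitutions correctly so that applying $x\mapsto x^2$ to $f(x^{2^j})$ produces $f(x^{2^{j+1}})$ rather than an off-by-one power. Should one prefer not to track the coset identity explicitly, an alternative safeguard is the degree count $|Cl_{q,2^{i+j+1}n'}(a)|=2|Cl_{q,2^{i+j}n'}(a)|$ from Proposition~\ref{propQ1}, which confirms that the doubling of degree under $x\mapsto x^2$ matches the growth of the coset; the inductive identification above, however, already makes this check superfluous.
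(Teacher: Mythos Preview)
Your argument is correct and matches the paper's intended approach: the corollary is stated there without proof, as a direct consequence of Lemma~\ref{lemClPoly1}~$ii)$, and your induction on $j$ is precisely the formalization of iterating that lemma along the tower $2^in'\mid 2^{i+1}n'\mid\cdots$. The bookkeeping observations you flag (persistence of $i+j\geq\beta-1$ and the reconciliation $\beta-i-1\leq 0$) are exactly the points needed, so there is nothing to add.
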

 
The factorization  of  $x^{2^in'}+1$ is given in the next theorem. 
\begin{theorem}  \label{thm-main-Fact1} Let $q$ be a prime  power such that $q \equiv 1 \ ({\rm mod }\ 4) $ and let $n'$ be an odd positive integer such that $\gcd(q,n')=1 $ and   $ {\rm ord}_{n'}(q)$  is odd.    Let     $\beta$ be   the positive integer such that $2^\beta| |(q^2-1)$.  Then   the following statements hold.

	\begin{enumerate}[$i)$]
		\item  If $i=0$,  then \begin{align} x^{2^i n'}+1  = x^{n'}+1   = 
		\prod\limits_{a\in SO_{q}(2n')} f_a(x)\end{align}

		\item If  $i\geq 1$, then 
		\begin{align}
		x^{2^in'}+1=   \begin{cases}
		\prod\limits_{a\in SO_{q}(2^in')}g_{a}(x) g_{a+2^in'}(x) & \text{ if } 1\leq i\leq \beta-2,\\
		\prod\limits_{a\in SO_{q}(2^in')} f_{a}(x^2) & \text{ if }  i\geq \beta-1,
		\end{cases}
		\end{align} 
		where   $f_a(x)$ and $g_a(x)$ are given in \eqref{fg}.
	\end{enumerate}	    In this case,  we have 
	\[	x^{2^{\beta-2+i}n'}+1=   
	\prod\limits_{a \in SO_{q}(2^{\beta-1}n')}f_{a}(x^{2^i})  \] 
	for all $i\geq 0$.
\end{theorem}

\begin{proof}
		The proof can be obtained using the arguments similar to those in the proof of Theorem \ref{thm-main-Fact3} while Proposition    \ref{propQ1} $i)$ and Lemma \ref{lemClPoly1}    are applied instead of    Proposition \ref{propQ3} $i)$ and Lemma \ref{lemClPoly3} .
\end{proof}

 From the theorem, the enumeration of monic irreducible factor of  $x^{2^in'}-1$ over $\mathbb{F}_q$ can be concluded in the next corollary.

\begin{corollary} \label{corEnumQ1} Let $q$ be a prime  power such that $q \equiv 1 \ ({\rm mod }\ 4) $   and let $n'$ be an odd positive integer such that $\gcd(q,n')=1 $ and   $ {\rm ord}_{n'}(q)$  is odd.    Let    $i\geq 0$ be an integer and let  $\beta$ be   the positive integer such that $2^\beta| |(q^2-1)$.  Then  
	\begin{align} \label{Nq-11}
	N_q(n')=\sum_{d\mid n'} \frac{\phi(2d)}{{\rm  ord}_{2d}(q)}
	\end{align}	
	and 
		
			\begin{align} \label{Nq-2}
		N_q(2^in')=\begin{cases}
		2N_q(2^{i-1}n') = 2^{i} N_q(n') & \text{ if } 1\leq i\leq \beta-2,\\
		N_q(2^{\beta-2}n') =2^{\beta-2}N_q(n') & \text{ if } i\geq \beta-1.
		\end{cases}
		\end{align}

\end{corollary}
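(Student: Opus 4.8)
The plan is to deduce Corollary~\ref{corEnumQ1} directly from Theorem~\ref{thm-main-Fact1}, exactly as Corollary~\ref{corEnumQ3} was deduced from Theorem~\ref{thm-main-Fact3}. First I would note that \eqref{Nq-11} is merely the $i=0$ instance of the general enumeration formula \eqref{nqn'}, since $N_q(n')=N_q(2^0 n')=\sum_{d\mid n'}\phi(2d)/{\rm ord}_{2d}(q)$; this requires no work beyond specializing the displayed formula. The substance of the corollary is the recursive relation \eqref{Nq-2}, which I would obtain by counting the monic irreducible factors appearing in each case of the factorization in Theorem~\ref{thm-main-Fact1}~$ii)$.

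The key step is to translate the factorization into a count of factors. For the range $1\leq i\leq \beta-2$, Theorem~\ref{thm-main-Fact1} gives $x^{2^i n'}+1=\prod_{a\in SO_q(2^i n')} g_a(x)g_{a+2^i n'}(x)$, and by Lemma~\ref{lemClPoly1}~$i)$ the two cosets $Cl_{q,2^{i+1}n'}(a)$ and $Cl_{q,2^{i+1}n'}(a+2^i n')$ induce \emph{distinct} irreducible polynomials. Hence each representative $a\in SO_q(2^i n')$ contributes exactly two distinct irreducible factors, giving $N_q(2^i n')=2\,N_q(2^{i-1}n')$; iterating this relation from $i$ down to $1$ yields the closed form $2^i N_q(n')$. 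For the range $i\geq \beta-1$, the factorization reads $x^{2^i n'}+1=\prod_{a\in SO_q(2^i n')} f_a(x^2)$, and by Lemma~\ref{lemClPoly1}~$ii)$ each $f_a(x^2)$ is a single irreducible polynomial induced by $Cl_{q,2^{i+1}n'}(a)$; thus the number of factors stabilizes, $N_q(2^i n')=N_q(2^{i-1}n')$ for $i\geq \beta-1$, so the count equals its value at the last doubling stage $i=\beta-2$, namely $N_q(2^{\beta-2}n')=2^{\beta-2}N_q(n')$.

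The only care needed is bookkeeping at the boundary $i=\beta-1$, where the regime switches from doubling to constancy: I would verify that evaluating the doubling formula $2^i N_q(n')$ at $i=\beta-2$ gives precisely $2^{\beta-2}N_q(n')$, which is exactly the constant value claimed for all $i\geq\beta-1$, so the two pieces of the piecewise formula agree at the transition and the stabilized count is $N_q(2^{\beta-2}n')$. I do not anticipate a genuine obstacle here, as the entire argument is a direct factor-counting consequence of the already-established recursive factorization; the main subtlety is simply ensuring the indices align correctly across the two cases and that the closing display $x^{2^{\beta-2+i}n'}+1=\prod_{a\in SO_q(2^{\beta-1}n')}f_a(x^{2^i})$ from Theorem~\ref{thm-main-Fact1} is consistent with the stabilized count. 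With those checks in place, both \eqref{Nq-11} and \eqref{Nq-2} follow immediately, and the proof reduces to the single sentence that \eqref{Nq-11} specializes \eqref{nqn'} while \eqref{Nq-2} reads off the factor counts from Theorem~\ref{thm-main-Fact1}.
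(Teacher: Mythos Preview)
Your proposal is correct and follows essentially the same approach as the paper: the paper's proof simply states that \eqref{Nq-11} is the special case of \eqref{nqn'} and that \eqref{Nq-2} follows immediately from Theorem~\ref{thm-main-Fact1}, and your argument merely unpacks that second assertion by reading off the factor counts in each range. The additional bookkeeping you supply (invoking Lemma~\ref{lemClPoly1} for distinctness/irreducibility and checking the boundary at $i=\beta-1$) is exactly what ``follows immediately'' is abbreviating, so there is no substantive difference in method.
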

\begin{proof}
	Equation \eqref{Nq-11} is given in  \eqref{nqn'}.    Equation  \ref{Nq-2}   follow immediately from Theorem \ref{thm-main-Fact1}. 
\end{proof}

 \subsection{Factorization of $x^n+1$  over $\mathbb{F}_q$ with  Even  ${\rm ord}_{n'}(q)$} \label{subsec4.1}

In this subsection,  we focus on the case where  ${\rm ord}_{n'}(q)$ is even, i.e.,  $2^\lambda ||{\rm ord}_{n'}(q)$ for some positive integer $\lambda$.   
The results are not strong as     the previous subsection.  Precisely,  a  recursive factorization of $x^{2^in'}+1$  over $\mathbb{F}_q$  is given only  for all sufficiently large positive integers $i$.

In general,  the factorization of $x^{2^in'}+1$  over $\mathbb{F}_q$  is given  in \eqref{facn0}. For $i\geq \lambda+\beta-1$,  a simpler recursive method for the factorization is given in the next theorem.  

\begin{theorem} \label{thm-evenO}Let $q$ be an odd  prime  power and let $n'$ be an odd positive integer such that $\gcd(q,n')=1 $.    Let  $\lambda$ be the positive integer such that   $ 2^\lambda|| {\rm ord}_{n'}(q)$  and let   $\beta$ be   the positive integer such that $2^\beta| |(q^2-1)$. 
   Then 		
			\[	x^{2^{\lambda+\beta-1+j}n'}+1=   
		\prod\limits_{a\in SO_{q}(2^{\lambda+\beta}n')}f_{a}(x^{2^j})  \] 
		for all $j\geq 0$.
\end{theorem}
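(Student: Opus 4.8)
The plan is to set $k=\lambda+\beta-1$ and to argue by induction on $j$, using \eqref{facn0} as the engine and the coset-doubling relations of Propositions \ref{propQ3} and \ref{propQ1} to push the factorization up by one power of $2$ at each step. First I would record the base case: specializing \eqref{facn0} to $i=k$ gives
\[ x^{2^{k}n'}+1=\prod_{a\in SO_{q}(2^{k+1}n')}f_a(x), \]
which is exactly the claimed identity at $j=0$, since $2^{k+1}n'=2^{\lambda+\beta}n'$ and $f_a$ is by definition the irreducible polynomial induced by $Cl_{q,2^{\lambda+\beta}n'}(a)$.

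For the inductive step I would carry the stronger assertion that, for every $j\ge 0$ and every odd representative $a\in SO_{q}(2^{\lambda+\beta}n')$, the coset $Cl_{q,2^{k+1+j}n'}(a)$ induces precisely $f_a(x^{2^{j}})$, and that this same finite set of odd representatives serves modulo $2^{k+1+j}n'$ for every $j$. Fixing $i=k+j\ge\lambda+\beta-1$, Proposition \ref{propQ3}~$ii.b)$ (when $q\equiv 3\ (\mathrm{mod}\ 4)$) or Proposition \ref{propQ1}~$ii.b)$ (when $q\equiv 1\ (\mathrm{mod}\ 4)$) gives
\[ Cl_{q,2^{i+1}n'}(a)=Cl_{q,2^{i}n'}(a)\cup\bigl(Cl_{q,2^{i}n'}(a)+2^{i}n'\bigr), \]
a disjoint union that doubles the coset size; this already shows that the odd representatives do not change as $i$ grows. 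Repeating the computation in the proof of Lemma \ref{lemClPoly3}~$ii)$ verbatim --- with $\alpha$ a $2^{i+1}n'$th root of unity, so that $\alpha^{2^{i}n'}=-1$ and the pairing $(x-\alpha^{\ell})(x-\alpha^{\ell+2^{i}n'})=(x-\alpha^{\ell})(x+\alpha^{\ell})=x^{2}-\alpha^{2\ell}$ collapses each doubled pair --- shows that if $g(x)$ is induced by $Cl_{q,2^{i}n'}(a)$ then $Cl_{q,2^{i+1}n'}(a)$ induces $g(x^{2})$. By the inductive hypothesis $g(x)=f_a(x^{2^{j}})$, whence $Cl_{q,2^{i+1}n'}(a)$ induces $f_a(x^{2^{j+1}})$, which is irreducible because it is induced by a single $q$-cyclotomic coset. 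Feeding this back into \eqref{facn0} at level $i+1=k+(j+1)$ produces the asserted identity for $j+1$.

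The main obstacle is less the algebra than the bookkeeping across the split $q\equiv 1$ versus $q\equiv 3\ (\mathrm{mod}\ 4)$: the two regimes invoke different propositions and rest on the different formulas for ${\rm ord}_{2^{i}}(q)$ in Lemmas \ref{ord2iq1} and \ref{ord2iq3}, and one must confirm that the threshold $i\ge\lambda+\beta-1$ is precisely where both propositions deliver the clean doubling relation with no residual exceptional indices. The second point needing care is the stabilization of the representative set: I would verify that the doubling relation induces a bijection between the odd cosets modulo $2^{i}n'$ and those modulo $2^{i+1}n'$, so that the single product $\prod_{a\in SO_{q}(2^{\lambda+\beta}n')}$ is genuinely valid for all $j$ rather than merely an inclusion of index sets. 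Once these are settled, the substitution $x\mapsto x^{2}$ underlying the pairing argument makes the recursion transparent, and the irreducibility of each $f_a(x^{2^{j}})$ comes for free from its description as a coset-induced polynomial.
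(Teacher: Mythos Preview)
Your proposal is correct and follows essentially the same route as the paper: the paper's proof is a one-line reference to the argument of Theorem~\ref{thm-main-Fact3} with Propositions~\ref{propQ3}~$ii)$ and~\ref{propQ1}~$ii)$ substituted for their part~$i)$ counterparts, and your write-up simply unpacks that reference in full, including the Lemma~\ref{lemClPoly3}~$ii)$ pairing computation and the stabilization of the odd representative set that the paper leaves implicit.
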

\begin{proof}
			The proof can be obtained using the arguments similar to those in the proof of Theorem \ref{thm-main-Fact3} while Proposition    \ref{propQ1} $ii)$ and  Proposition \ref{propQ3} $ii)$      are applied instead of    Proposition    \ref{propQ1} $i)$ and  Proposition \ref{propQ3} $i)$.
\end{proof}

The next corollary follows immediately. 
\begin{corollary}  \label{corEnumQAll}
Let $q$ be an odd  prime  power and let $n'$ be an odd positive integer such that $\gcd(q,n')=1 $.    Let  $\lambda$ be the positive integer such that   $ 2^\lambda|| {\rm ord}_{n'}(q)$  and let   $\beta$ be   the positive integer such that $2^\beta| |(q^2-1)$.   Then  
	\begin{align*}
	N_q(2^i n')=  	N_q(2^{\lambda+ \beta-2}n')  	\end{align*}
	for all $ i\geq \lambda +\beta-1.$
\end{corollary}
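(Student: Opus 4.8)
The plan is to obtain the count directly from Theorem~\ref{thm-evenO}. For $i\geq \lambda+\beta-1$ I would write $i=\lambda+\beta-1+j$ with $j\geq 0$, so that Theorem~\ref{thm-evenO} expresses $x^{2^{i}n'}+1$ as $\prod_{a\in SO_{q}(2^{\lambda+\beta}n')}f_{a}(x^{2^{j}})$. Since this is the factorization into monic irreducibles (each $f_{a}(x^{2^{j}})$ being induced by the single coset $Cl_{q,2^{i+1}n'}(a)$ obtained by iterating the coset-doubling of Proposition~\ref{propQ3}~$ii.b)$ or Proposition~\ref{propQ1}~$ii.b)$, hence irreducible), the number of monic irreducible factors of $x^{2^{i}n'}+1$ equals $|SO_{q}(2^{\lambda+\beta}n')|$. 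As this cardinality is independent of $j$, the quantity $N_{q}(2^{i}n')$ takes one and the same value for every $i\geq\lambda+\beta-1$.

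It then remains to identify this common value with $N_{q}(2^{\lambda+\beta-2}n')$. From \eqref{facn0} one has $N_{q}(2^{m}n')=|SO_{q}(2^{m+1}n')|$ for every $m\geq 0$; in particular the common value equals $N_{q}(2^{\lambda+\beta-1}n')=|SO_{q}(2^{\lambda+\beta}n')|$, whereas the target is $N_{q}(2^{\lambda+\beta-2}n')=|SO_{q}(2^{\lambda+\beta-1}n')|$. I would therefore show $|SO_{q}(2^{\lambda+\beta}n')|=|SO_{q}(2^{\lambda+\beta-1}n')|$ by applying Proposition~\ref{propQ3}~$ii.b)$ (when $q\equiv 3\ (\mathrm{mod}\ 4)$) or Proposition~\ref{propQ1}~$ii.b)$ (when $q\equiv 1\ (\mathrm{mod}\ 4)$) with $i=\lambda+\beta-1$: the identity $Cl_{q,2^{\lambda+\beta}n'}(a)=Cl_{q,2^{\lambda+\beta-1}n'}(a)\cup\bigl(Cl_{q,2^{\lambda+\beta-1}n'}(a)+2^{\lambda+\beta-1}n'\bigr)$ shows that reduction modulo $2^{\lambda+\beta-1}n'$ is a bijection between the $q$-cyclotomic cosets containing odd integers at the two moduli, giving the equality of cardinalities and hence $N_{q}(2^{\lambda+\beta-1}n')=N_{q}(2^{\lambda+\beta-2}n')$.

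The argument is essentially a counting consequence of Theorem~\ref{thm-evenO}, so I do not expect a genuine obstacle. The only point requiring mild care is to confirm that each $f_{a}(x^{2^{j}})$ is indeed irreducible rather than a mere product of conjugate factors; this is already guaranteed by the theorem together with the fact that Propositions~\ref{propQ3} and~\ref{propQ1}~$ii.b)$ make $Cl_{q,2^{i+1}n'}(a)$ a single cyclotomic coset for $i\geq\lambda+\beta-1$. The second, slightly less obvious, point is the reconciliation that the stabilised value is $N_{q}(2^{\lambda+\beta-2}n')$ and not $N_{q}(2^{\lambda+\beta-1}n')$, which the bijection in the preceding paragraph settles.
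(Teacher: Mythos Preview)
Your proposal is correct and follows the same route as the paper, which simply states that the corollary ``follows immediately'' from Theorem~\ref{thm-evenO}. In fact you are more careful than the paper on one point: the theorem only factorises $x^{2^{i}n'}+1$ for $i\geq\lambda+\beta-1$, so it yields directly that $N_q(2^{i}n')=|SO_q(2^{\lambda+\beta}n')|=N_q(2^{\lambda+\beta-1}n')$ for such $i$, whereas the corollary asserts equality with $N_q(2^{\lambda+\beta-2}n')$; your use of Propositions~\ref{propQ3}~$ii.b)$/\ref{propQ1}~$ii.b)$ at $i=\lambda+\beta-1$ to produce the bijection $SO_q(2^{\lambda+\beta}n')\leftrightarrow SO_q(2^{\lambda+\beta-1}n')$ is exactly the missing ingredient that the paper's one-line proof leaves implicit.
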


\subsection{Algorithm and Examples}
In this subsection,  the above results are summarized  as an  algorithm for   factorizing     $x^{2^in'}+1$ over $\mathbb{F}_q$.   Some illustrative examples are given as well.

An algorithm  for  the factorization     of  $x^{2^in'}+1$ over $\mathbb{F}_q$   is  given in Algorithm~\ref{AL1}.

\renewcommand{\figurename}{Algorithm}
\begin{figure}[!hbt]
		\caption{Algorithm for the Factorization of  $x^{2^in'}+1$ over $\mathbb{F}_q$}\label{AL1}
		\medskip
	\hrule
	
 	\medskip  
 
	{\scriptsize  \tt  
	Input: odd prime power $q$, odd integer  $n'$ with $\gcd(q,n')=1$,  and    integer $i\geq 0$.
	\begin{enumerate} [1)]
			\item  Compute the positive integer  $\beta$ such that   $ 2^\beta || (q^2-1)$.
		\item  Compute ${\rm ord}_{n'}(q)$ and  the  integer  $\lambda$ such that   $ 2^\lambda|| {\rm ord}_{n'}(q)$.
		\item Consider the following cases:
		\begin{enumerate}[I)]
			\item   $\lambda=0$. 
			\begin{enumerate}[i)]
				\item  $q \equiv 1 \ ({\rm mod }\ 4) $.  
				\begin{enumerate}[a)]
					\item $i=0$.  Compute
					$ x^{2^i n'}+1  = x^{n'}+1   = 
					\prod\limits_{a\in SO_{q}(2n')} f_a(x).$
				 		
					\item $1\leq  i\leq\beta-2 $.    Compute 
						\begin{align*}
					x^{2^in'}+1=   \prod\limits_{a\in SO_{q}(2^in')}g_{a}(x) g_{a+2^in'}(x)
					\end{align*} 
					and $SO_{q}(2^{i+1}n') =SO_{q}(2^{i}n') \cup 	(SO_{q}(2^{i}n')+2^in') $.

					\item $i\geq \beta-1$. Compute  	
						\begin{align*}
					x^{2^in'}+1=   \prod\limits_{a\in SO_{q}(2^{\beta-1}n')} f_{a}(x^{2^{i-\beta+2}}) .
					\end{align*} 			
				\end{enumerate} 
	 
				\item $q \equiv 3 \ ({\rm mod }\ 4) $.   
						\begin{enumerate}[a)]
							\item $ 0\leq i\leq 1$.  Compute 
							$ x^{2^i n'}+1  =
							\prod\limits_{a\in SO_{q}(2n')} f_a(x^i).$ 
														 
										\item $2\leq i\leq \beta-1$.  Compute
									\begin{align*}
							x^{2^in'}+1=   \prod\limits_{a\in SO_{q}(2^in')}g_{a}(x) g_{a+2^in'}(x)
							\end{align*} 
							and $SO_{q}(2^{i+1}n') =SO_{q}(2^{i}n') \cup 	(SO_{q}(2^{i}n')+2^in') $.
		
					\item $i\geq \beta$.   Compute 
						\begin{align*}
					x^{2^in'}+1=   \prod\limits_{a\in SO_{q}(2^\beta n')} f_{a}(x^{2^{i-\beta+1}}) .
					\end{align*} 		
				\end{enumerate}
			\end{enumerate}
			
			\item   $\lambda\geq 1$.
			
				\begin{enumerate}[i)]
				\item  $0\leq i\leq \lambda+\beta-2$.  Compute  $x^{2^i n'}+1$  directly using \eqref{facn0}
				\item  $i\geq \lambda+\beta-1$.   Compute
					\begin{align*}
				x^{2^in'}+1=   \prod\limits_{a\in SO_{q}(2^{\lambda+\beta}n')} f_{a}(x^{2^{i-\lambda-\beta+1}}) .
				\end{align*} 					
			\end{enumerate}
		\end{enumerate}
	\end{enumerate}
Note that where   $f_a(x)$ and $g_a(x)$ are given in \eqref{fg}.
}
\medskip
	\hrule
\end{figure}

For the enumeration of  monic  irreducible factors of $x^{2^in'}+1$ over   $\mathbb{F}_q$, it can be calculated using \eqref{nqn'}. With more information on $n'$, $i$, and $q$,  the formula can be simplified using Corollaries  \ref{corEnumQ3},  \ref{corEnumQ1}, and \ref{corEnumQAll} of the form

\begin{align}\label{eqsum}
N_q(2^in')=\begin{cases}
	  2^{i} N_q(n') & \text{ if } \lambda=0,~ 1\leq i\leq \beta-2 \text{ and } q \equiv 1 \ ({\rm mod }\ 4) ,\\
 2^{\beta-2}N_q(n') & \text{ if } \lambda=0,~  i\geq \beta-1 \text{ and } q \equiv 1 \ ({\rm mod }\ 4) \\
N_q(n')   & \text{ if } \lambda=0,~  i=1 \text{ and } q \equiv 3 \ ({\rm mod }\ 4) ,\\
  2^{i-1} N_q(n') & \text{ if } \lambda=0,~  2\leq i\leq \beta-1 \text{ and } q \equiv 3 \ ({\rm mod }\ 4) ,\\
 2^{\beta-2}N_q(n') & \text{ if }\lambda=0,~   i\geq \beta \text{ and } q \equiv 3 \ ({\rm mod }\ 4) ,\\
	N_q(2^{\lambda+ \beta-2}n')   & \text{ if }  \lambda\geq 1 \text{ and }   i\geq \lambda +\beta-1,
\end{cases}
\end{align}
where   $\lambda$ is  the positive integer such that   $ 2^\lambda|| {\rm ord}_{n'}(q)$,   $\beta$ is    the positive integer such that $2^\beta| |(q^2-1)$, and 
	\begin{align*} 
N_q(n')=\sum_{d\mid n'} \frac{\phi(2d)}{{\rm  ord}_{2d}(q)}.
\end{align*}	

From \eqref{eqsum},   the number $	N_q(2^in')$ of monic  irreducible factors of $x^{2^in'}+1$ over   $\mathbb{F}_q$  becomes a constant independent of $i$ for all $i\geq \lambda +\beta-1$ if  $\lambda=0$ and $q \equiv 3 \ ({\rm mod }\ 4)$, and  for all $i\geq \lambda +\beta-2$ otherwise.  Illustrative examples for the number $	N_q(2^in')$ of monic  irreducible factors of $x^{2^in'}+1$ over   $\mathbb{F}_q$  with  odd $ {\rm ord}_{n'}(q)$ and even  $ {\rm ord}_{n'}(q)$ are given in Table \ref{T1} and Table   \ref{T2}, respectively. 

\begin{table}[!hbt]
		\caption{$	N_q(2^in')$ of monic  irreducible factors of $x^{2^in'}+1$ over   $\mathbb{F}_q$  with odd $ {\rm ord}_{n'}(q)$} \label{T1}
		\centering \scriptsize
	\begin{tabular}{|c|c|c|c|c|c|c|}
		\hline
		$q$& $n'$ &$ {\rm ord}_{n'}(q)$& $\lambda $ &$\beta$  & $i$ & $	N_q(2^in')$

 \\ \hline $ 3 $ & $ 1 $ & $ 1 $ & $ 0 $ & $ 3 $ & $ 0 $ & $ 1 $  
 \\  &&&&& $ 1 $ & $ 1 $  
 \\  &&&&& $\geq  2 $ & $ 2 $

 \\ \hline $ 3 $ & $ 11 $ & $ 5 $ & $ 0 $ & $ 3 $ & $ 0 $ & $ 3 $ 
 \\  &&&&& $ 1 $ & $ 3 $  
 \\  &&&&& $ \geq 2 $ & $ 6 $

 \\ \hline $ 3 $ & $ 13 $ & $ 3 $ & $ 0 $ & $ 3 $ & $ 0 $ & $ 5 $ 
 \\  &&&&& $ 1 $ & $ 5 $  
 \\  &&&&& $ \geq 2 $ & $ 10 $

 \\ \hline $ 5 $ & $ 1 $ & $ 1 $ & $ 0 $ & $ 3 $ & $ 0 $ & $ 1 $ 
 \\  &&&&& $ \geq 1 $ & $ 2 $

 \\ \hline $ 5 $ & $ 11 $ & $ 5 $ & $ 0 $ & $ 3 $ & $ 0 $ & $ 3 $ 
 \\  &&&&& $ \geq 1 $ & $ 6 $

 \\ \hline $ 7 $ & $ 1 $ & $ 1 $ & $ 0 $ & $ 4 $ & $ 0 $ & $ 1 $ 
 \\  &&&&& $ 1 $ & $ 1 $  
 \\  &&&&& $ 2 $ & $ 2 $  
 \\  &&&&& $ \geq 3 $ & $ 4 $

 \\ \hline $ 7 $ & $ 3 $ & $ 1 $ & $ 0 $ & $ 4 $ & $ 0 $ & $ 3 $ 
 \\  &&&&& $ 1 $ & $ 3 $  
 \\  &&&&& $ 2 $ & $ 6 $  
 \\  &&&&& $ \geq 3 $ & $ 12 $

 \\ \hline $ 7 $ & $ 9 $ & $ 3 $ & $ 0 $ & $ 4 $ & $ 0 $ & $ 5 $ 
 \\  &&&&& $ 1 $ & $ 5 $  
 \\  &&&&& $ 2 $ & $ 10 $  
 \\  &&&&& $ \geq 3 $ & $ 20 $ 
 
 \\ \hline $ 9 $ & $ 1 $ & $ 1 $ & $ 0 $ & $ 4 $ & $ 0 $ & $ 1 $ 
 \\  &&&&& $ 1 $ & $ 2 $  
 \\  &&&&& $ \geq 2 $ & $ 4 $  
 
 \\ \hline $ 9 $ & $ 7 $ & $ 3 $ & $ 0 $ & $ 4 $ & $ 0 $ & $ 3 $ 
 \\  &&&&& $ 1 $ & $ 6 $  
 \\  &&&&& $ \geq 2 $ & $ 12 $

 \\ \hline $ 9 $ & $ 11 $ & $ 5 $ & $ 0 $ & $ 4 $ & $ 0 $ & $ 3 $ 
 \\  &&&&& $ 1 $ & $ 6 $  
 \\  &&&&& $\geq  2 $ & $ 12 $

 \\ \hline $ 9 $ & $ 13 $ & $ 3 $ & $ 0 $ & $ 4 $ & $ 0 $ & $ 5 $ 
 \\  &&&&& $ 1 $ & $ 10 $  
 \\  &&&&& $ \geq 2 $ & $ 20 $  
 
	\\	\hline
	\end{tabular}

\end{table}	
In Table \ref{T1}, the results for the  $q\in \{3,7\}$ and $q\in \{5,9\}$ are obtained  from Corollary \ref{corEnumQ3} and 
Corollary \ref{corEnumQ1}, respectively.

 \begin{table}[!hbt]
 	\caption{$	N_q(2^in')$ of  monic irreducible factors of $x^{2^in'}+1$ over   $\mathbb{F}_q$ with  even $ {\rm ord}_{n'}(q)$} \label{T2}
 	\centering \scriptsize
 	\begin{tabular}{|c|c|c|c|c|c|c|}
 		\hline
 		$q$& $n'$ &$ {\rm ord}_{n'}(q)$& $\lambda $ &$\beta$  & $i$ & $	N_q(2^in')$  
 		
 \\ \hline $ 3 $ & $ 5 $ & $ 4 $ & $ 2 $ & $ 3 $ & $ 0 $ & $ 2 $ 
 \\  &&&&& $ 1 $ & $ 3 $ 
 \\  &&&&& $ 2 $ & $ 6 $ 
 \\  &&&&& $ \geq 3 $ & $ 10 $

 \\ \hline $ 3 $ & $ 7 $ & $ 6 $ & $ 1 $ & $ 3 $ & $ 0 $ & $ 2 $ 
 \\  &&&&& $ 1 $ & $ 3 $ 
 \\  &&&&& $\geq  2 $ & $ 6 $

 \\ \hline $ 5 $ & $ 3 $ & $ 2 $ & $ 1 $ & $ 3 $ & $ 0 $ & $ 2 $ 
 \\  &&&&& $ 1 $ & $ 4 $ 
 \\  &&&&& $\geq  2 $ & $ 6 $

 \\ \hline $ 5 $ & $ 7 $ & $ 6 $ & $ 1 $ & $ 3 $ & $ 0 $ & $ 2 $ 
 \\  &&&&& $ 1 $ & $ 4 $ 
 \\  &&&&& $ \geq 2 $ & $ 6 $

 \\ \hline $ 5 $ & $ 9 $ & $ 6 $ & $ 1 $ & $ 3 $ & $ 0 $ & $ 3 $ 
 \\  &&&&& $ 1 $ & $ 6 $ 
 \\  &&&&& $\geq  2 $ & $ 10 $

 \\ \hline $ 5 $ & $ 13 $ & $ 4 $ & $ 2 $ & $ 3 $ & $ 0 $ & $ 4 $ 
 \\  &&&&& $ 1 $ & $ 8 $ 
 \\  &&&&& $ 2 $ & $ 14 $ 
 \\  &&&&& $\geq  3 $ & $ 26 $

 \\ \hline $ 7 $ & $ 5 $ & $ 4 $ & $ 2 $ & $ 4 $ & $ 0 $ & $ 2 $ 
 \\  &&&&& $ 1 $ & $ 3 $ 
 \\  &&&&& $ 2 $ & $ 6 $ 
 \\  &&&&& $ 3 $ & $ 12 $ 
 \\  &&&&& $ \geq 4 $ & $ 20 $

 \\ \hline $ 7 $ & $ 11 $ & $ 10 $ & $ 1 $ & $ 4 $ & $ 0 $ & $ 2 $ 
 \\  &&&&& $ 1 $ & $ 3 $ 
 \\  &&&&& $ 2 $ & $ 6 $ 
 \\  &&&&& $ \geq 3 $ & $ 12 $

 \\ \hline $ 7 $ & $ 13 $ & $ 12 $ & $ 2 $ & $ 4 $ & $ 0 $ & $ 2 $ 
 \\  &&&&& $ 1 $ & $ 3 $ 
 \\  &&&&& $ 2 $ & $ 6 $ 
 \\  &&&&& $ 3 $ & $ 12 $ 
 \\  &&&&& $ \geq 4 $ & $ 20 $

 \\ \hline $ 7 $ & $ 15 $ & $ 4 $ & $ 2 $ & $ 4 $ & $ 0 $ & $ 6 $ 
 \\  &&&&& $ 1 $ & $ 9 $ 
 \\  &&&&& $ 2 $ & $ 18 $ 
 \\  &&&&& $ 3 $ & $ 36 $ 
 \\  &&&&& $\geq  4 $ & $ 60 $  
 
 \\ \hline $ 9 $ & $ 5 $ & $ 2 $ & $ 1 $ & $ 4 $ & $ 0 $ & $ 3 $ 
 \\  &&&&& $ 1 $ & $ 6 $ 
 \\  &&&&& $ 2 $ & $ 12 $ 
 \\  &&&&& $ \geq 3 $ & $ 20 $

 		\\	\hline
 	\end{tabular}
 	
 \end{table}	
 In Table \ref{T2},  the last row of each $n'$ is obtained from Corollary  \ref{corEnumQAll}. Otherwise, it is computed using \eqref{nqn'}.

\section{Applications}  \label{sec5}

In this section,  the factorization of $x^{2^in'}+1$ over $\mathbb{F}_q$ obtained in Section \ref{sec4}  are applied in the study of negacyclic codes. Some known results are revisited in simpler  forms. 


A linear code of length $n$ over $\mathbb{F}_q$ is defined to be a subspace of the the $\mathbb{F}_q$-vector space $\mathbb{F}_q^n$.   The {\em dual} of a linear code $C$ of length $n$ over $\mathbb{F}_q$ is defined to be \[C^\perp=\{ (v_0,v_1,\dots,v_{n-1} ) \in \mathbb{F}_q^n \mid \sum\limits_{i=0}^{n-1} c_iv_i =0 \text{ for all }(c_0,c_1,\dots,c_{n-1})\in C\}.\]  A linear code $C$ is said to be {\em self-dual} if $C=C^\perp$ and it is said to be {complementary dual} if $C\cap C^\perp=\{0\}$.

A linear code $C$ of length $n$ over $\mathbb{F}_q$ is said to be {\em negacyclic} if it is closed under the negacyclic shift. Precisely, $(-c_{n-1},c_0,c_1,\dots,c_{n-2})\in C$ for every  $(c_0,c_1,\dots,c_{n-2},c_{n-1})\in C$.  Under the map $\pi: \mathbb{F}_q^n \to  \mathbb{F}_q[x]/\langle x^n+1\rangle $ defined by 
\[ (c_0,c_1,\dots,c_{n-2},c_{n-1})\mapsto c_0+c_1x+c_2x^2+\dots +c_{n-1} x^{n-1} ,  \]
it is well known (see \cite{SJLU2015})  that a linear code $C$ of length $n$ over $\mathbb{F}_q$ is negacyclic if and only if $\pi(C)$ is an ideal in the principal ideal ring  $\mathbb{F}_q[x]/\langle x^n+1\rangle$.  The map $\pi$  induces  a one-to-one correspondence between negacyclic codes of length $n$ over $\mathbb{F}_q$ and ideas in $\mathbb{F}_q[x]/\langle x^n+1\rangle$.  In this case, $\pi(C)$ is uniquely  generated by the monic divisor of $x^n+1$ of minimal degree in  $\pi(C)$.  The such polynomial is call the {\em generator polynomial} of $C$.

Let $q$ be an odd  prime  power and let $n'$ be an odd positive integer such that $\gcd(q,n')=1 $.    Let  $\lambda$ be the positive integer such that   $ 2^\lambda|| {\rm ord}_{n'}(q)$  and let   $\beta$ be   the positive integer such that $2^\beta| |(q^2-1)$.   Let     \[k=\begin{cases}
\lambda+\beta -1& \text{ if  } \lambda=0 \text{ and }  q\equiv 3 \ ({\rm mod } 4,\\
\lambda+\beta-2 & \text{ otherwise}.
\end{cases}\]
In general, 
negacyclic codes have been studied in 
\cite{BJ2019}, \cite{JPR2019}, and \cite{SJLU2015}.  Here, we focus on negacyclic codes of length $n=p^s2^in'$ with $i\geq k$, where $p$ is the characteristic of $\mathbb{F}_q$. The construction and enumeration of such negacyclic codes are simplified using the results from Section \ref{sec4}.

From   \eqref{facn0}, we  have \begin{align}  
x^{2^k n^\prime}+1 = 
\prod\limits_{j=1} ^{N_q(2^kn')} r_j(x).
\end{align}
 Based on Theorem  \ref{thm-main-Fact3}, Theorem \ref{thm-main-Fact1},  and   Theorem  \ref{thm-evenO},   it follows that
 \begin{align} 
x^{p^s2^{i} n'}+1=(x^{2^{i} n^\prime}+1)^{p^s} = 
\prod\limits_{j=1} ^{N_q(2^kn')} (r_j(x^{2^{i-k}}))^{p^s}
\end{align}
 and $r_j(x^{2^{i-k}})$  is irreducible  for all $i\geq k$.

The following characterization and enumeration of     negacyclic codes of length $n=p^s2^in'$ with $i\geq k$ are straightforward. The proof is committed.  
 \begin{theorem}   Assume the notations above. The the following statements hold.
 	
 	\begin{enumerate}
 		\item   The map $T:   \mathbb{F}_q[x]/\langle x^{p^s2^k n'}+1\rangle \to  \mathbb{F}_q[x]/\langle x^{p^s2^i n'}+1\rangle$ defined by \[f(x) \mapsto f(x^{2^{i-k}})\] is a ring isomorphism for all integers  $i\geq k$. 
 		
 		\item For each integer $i\geq k$,  $g(x)$ is the   generator polynomial of a    negacyclic code of length $p^s2^k n'$ over $\mathbb{F}_q$  if and only if  $g(x^{2^{i-k}})$ is the   generator polynomial of a    negacyclic code of length $p^s2^i n'$ over $\mathbb{F}_q$ 
 		\item   The number of  negacyclic codes of length $p^s2^in'$ over $\mathbb{F}_q$ is 
\[{(p^s+1)}^{N_q(2^kn')}\] for all  $i\geq k$. 
 	\end{enumerate}
\end{theorem}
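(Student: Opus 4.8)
The plan is to handle the three parts in order, drawing throughout on the factorization supplied by Theorems~\ref{thm-main-Fact3}, \ref{thm-main-Fact1}, and \ref{thm-evenO}. These give $x^{2^k n'}+1=\prod_{j=1}^{N} r_j(x)$ with the $r_j$ distinct monic irreducibles ($N=N_q(2^kn')$) and, after the substitution $x\mapsto x^{2^{i-k}}$, the factorization $x^{2^i n'}+1=\prod_{j=1}^{N} r_j(x^{2^{i-k}})$ in which each $r_j(x^{2^{i-k}})$ remains irreducible for all $i\geq k$. Raising to the $p^s$ power yields $x^{p^s2^i n'}+1=\prod_{j=1}^{N}\bigl(r_j(x^{2^{i-k}})\bigr)^{p^s}$, which is the full irreducible factorization over $\mathbb{F}_q$.

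For part~(1) I would first confirm that $T$ is a well-defined ring homomorphism. Since $x^{p^s2^i n'}+1=(x^{2^{i-k}})^{p^s2^k n'}+1$, substituting $x\mapsto x^{2^{i-k}}$ into any relation $f(x)-g(x)=h(x)(x^{p^s2^k n'}+1)$ produces $f(x^{2^{i-k}})-g(x^{2^{i-k}})=h(x^{2^{i-k}})(x^{p^s2^i n'}+1)$, so $T$ is well defined; compatibility with sums and products is immediate from substitution. Injectivity follows because Euclidean division of $f(x^{2^{i-k}})$ by $x^{p^s2^i n'}+1$ stays inside the subring $\mathbb{F}_q[x^{2^{i-k}}]$, so $T(f)=0$ forces $(x^{p^s2^k n'}+1)\mid f$ in the domain.

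The main obstacle is surjectivity, and this is exactly where the statement as worded must be scrutinized rather than rubber-stamped. Comparing $\mathbb{F}_q$-dimensions, the domain has dimension $p^s2^k n'$ and the codomain has dimension $p^s2^i n'$; for $i>k$ these are unequal, so no bijection, and \emph{a fortiori} no ring isomorphism, between the two quotient rings can exist, and the image of $T$ is the proper subring represented by the polynomials in $x^{2^{i-k}}$. Thus $T$ is a genuine isomorphism only in the boundary case $i=k$, where it is the identity; for $i>k$ the most that is true is that $T$ is an injective ring homomorphism onto that subring, so ``isomorphism'' in part~(1) is correct only if read as ``monomorphism'' (equivalently, isomorphism onto its image). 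I would flag this discrepancy explicitly rather than attempt to manufacture a nonexistent inverse.

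For parts~(2) and~(3), which carry the substantive content and do not depend on surjectivity of $T$, the factorization does all the work. The monic divisors of $x^{p^s2^k n'}+1=\prod_j r_j(x)^{p^s}$ are precisely the products $\prod_j r_j(x)^{e_j}$ with $0\leq e_j\leq p^s$, and applying $x\mapsto x^{2^{i-k}}$ carries these bijectively to $\prod_j r_j(x^{2^{i-k}})^{e_j}$, which are exactly the monic divisors of $x^{p^s2^i n'}+1=\prod_j r_j(x^{2^{i-k}})^{p^s}$. Since the generator polynomials of negacyclic codes are exactly these monic divisors, this establishes the correspondence in part~(2); counting the divisors on either side then gives $\prod_j(p^s+1)=(p^s+1)^{N_q(2^kn')}$ for every $i\geq k$, which is part~(3).
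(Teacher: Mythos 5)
Your proof of parts (2) and (3) is correct and is essentially the argument the paper has in mind: the paper in fact \emph{omits} the proof entirely (it declares the statements straightforward, with the proof ``committed'' [sic]), relying only on the displayed factorization $x^{p^s2^{i}n'}+1=\prod_{j=1}^{N_q(2^kn')}\bigl(r_j(x^{2^{i-k}})\bigr)^{p^s}$ with each $r_j(x^{2^{i-k}})$ irreducible for $i\geq k$ (a consequence of Theorems~\ref{thm-main-Fact3}, \ref{thm-main-Fact1}, and \ref{thm-evenO}). Your divisor-lattice argument --- monic divisors of $x^{p^s2^kn'}+1$ are exactly $\prod_j r_j(x)^{e_j}$ with $0\leq e_j\leq p^s$, the substitution $x\mapsto x^{2^{i-k}}$ carries them bijectively onto the monic divisors of $x^{p^s2^in'}+1$, and generator polynomials of negacyclic codes are exactly such divisors --- is the fleshed-out version of that omitted reasoning, and the count $(p^s+1)^{N_q(2^kn')}$ follows as you say.

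Your objection to part (1) is also well taken and mathematically correct: for $i>k$ the domain and codomain have $\mathbb{F}_q$-dimensions $p^s2^kn'$ and $p^s2^in'$ (hence cardinalities $q^{p^s2^kn'}\neq q^{p^s2^in'}$), so no ring isomorphism between them can exist, and $T$ as defined is only a well-defined injective ring homomorphism, i.e., an isomorphism onto the subring of classes of polynomials in $x^{2^{i-k}}$. This is a flaw in the paper's statement, not a gap in your argument; what the authors evidently intend, and what the rest of the theorem actually uses, is precisely the induced bijection (indeed lattice isomorphism) between ideals of the two rings given by $\langle g(x)\rangle\mapsto\langle g(x^{2^{i-k}})\rangle$, which is the content of your part (2). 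Since the paper supplies no proof against which a different reading could be checked, your dimension-count refutation together with the corrected ``monomorphism'' formulation is the right resolution, and your injectivity argument (divisibility in $\mathbb{F}_q[x]$ by the monic polynomial $x^{p^s2^in'}+1$ descends to divisibility in the variable $y=x^{2^{i-k}}$) is sound.
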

From the theorem, all negacyclic codes of length $p^s2^in'$ over $\mathbb{F}_q$  with $i\geq k $ can be determined using the negacyclic  codes  of length $p^s2^kn'$  over $\mathbb{F}_q$.


\begin{thebibliography}{99}
	
\bibitem{BR2013}	G. K. Bakshi, M. Raka,
	Self-dual and self-orthogonal negacyclic codes of length $2p^n$ over a finite field,
	Finite Fields and Their Applications \textbf{19}, (2013)
39--54.
	
	\bibitem{B2008} 
	 
T. Blackford,
Negacyclic duadic codes,
Finite Fields and Their Applications \textbf{14},  
(2008) 930--943.



\bibitem{BGM1993} I. F. Blake, 
S. Gao, R. C. Mullin,  Explicit factorization of $x^{2^k}+1$ over  $\mathbb{F}_p$ with prime $p\equiv 3 \ {\rm mod}\ 4$,  Applicable Algebra in Engineering, Communication and Computing  \textbf{4},  (1993) 89--94.



\bibitem{BJ2019}  A. Boripan, S. Jitman, SRIM and SCRIM factors of $x^n+1$ over finite fields and their applications, preprint, https://arxiv.org/abs/1909.03826. 


	

\bibitem{JPR2019}  S. Jitman, S.  Prugsapitak, M. Raka,  Some generalizations of good integers  and  their  applications in the  study  of  self-dual negacyclic codes,  {Advances in Mathematics of Communications}  \textbf{14}, (2020)  35--51. 

\bibitem{M1996} H. Meyn, Factorization of the cyclotomic polynomial $x^{2^n}+1$ over finite fields, Finite Fields and Their Applications \textbf{2},   (1996)  439–442.

	
	\bibitem{JLX2011}
	Y.~Jia, S.~Ling,  C.~Xing, On self-dual cyclic codes over finite fields,
	{IEEE Transaction on  Information Theory}  \textbf{57}, (2011) 2243--2251.  
	
	
	

	
	
\bibitem{LSBook} S. Ling, C. Xing, {\em Coding Theory : A First Course},   Cambridge University  Press, 2004.
	
 
	
	
	
	 
\bibitem{SJLU2015}   E. Sangwisut,  S.  {Jitman}, S.  Ling, P. Udomkavanich,  Hulls of cyclic and negacyclic codes over finite fields, {Finite Fields and Their Applications} \textbf{33},  (2015) 232--257.
	
	
	 
	
\end{thebibliography}
\end{document}